\newtheorem{thm}{Theorem}[section]
\newtheorem{lemma}[thm]{Lemma}
\newtheorem{cor}[thm]{Corollary}
\newtheorem{prop}[thm]{Proposition}
\theoremstyle{definition}
\newtheorem{example}[thm]{Example}
\theoremstyle{remark}
\newtheorem{remark}[thm]{Remark}
\newtheorem{claim}[thm]{Claim}
\newtheorem{obs}[thm]{Observation}
\numberwithin{equation}{section}
\newcommand{\acts}{\curvearrowright}
\newcommand{\cU}{\mathcal{U}}
\newcommand{\cH}{\mathcal{H}}
\DeclareMathOperator{\Aut}{Aut}
\begin{document}
\title[Infinite volume ends]{Infinite volume ends of quotient graphs \\and homogeneous spaces}
\begin{abstract}
    We introduce the space of infinite volume ends of a locally compact second countable (lcsc) space that admits a Radon measure. In certain cases, this coincides with the classical space of ends. 
    
    Consider a discrete subgroup $\Gamma$ of a unimodular lcsc group $G$ that is not coamenable. Assume that $G$ has property (T) and the associated homogeneous space $G/\Gamma$ is equipped with the Haar measure. We demonstrate that if $G$ is path connected, then $G/\Gamma$ has exactly one infinite volume end. In a related vein, if $G$ acts transitively on a locally finite connected graph $X$ with compact open vertex stabilizers and the action of the subgroup $\Gamma$ is free, we show that $X/\Gamma$ has exactly one end. We also obtain identical results for certain discrete subgroups $\Gamma$ of nonamenable product groups $G$.

    These results can be applied to understand ends of Schreier graphs and infinite volume ends of quotients of symmetric spaces of noncompact type. For instance, for symmetric spaces $X$ of noncompact type without real or complex hyperbolic factors, every infinite-covolume quotient $\Gamma\backslash X$ has exactly one end of infinite Riemannian volume.
\end{abstract}
\author[Konrad Wr\'{o}bel]{Konrad Wr\'{o}bel}
\address[Konrad Wr\'{o}bel]{Department of Mathematics, Jagiellonian University, Krak\'ow, Poland}
\email{konrad1.wrobel@uj.edu.pl}

\maketitle

\section{Introduction}

The study of ends of topological spaces has long been central in geometric group theory since Freudenthal \cite{Freudenthal1931}, where he introduced the notion and showed connected locally compact groups have at most two ends. The work of Freudenthal \cite{Freudenthal:2Ends}, and independently of Zippin \cite{Zippin:2Ends} and Iwasawa \cite{Iwasawa:2Ends}, then characterized those connected locally compact groups with two ends as those that are direct products of a compact group with the reals. 

For finitely generated discrete groups, the number of ends of their Cayley graphs is an important topological invariant, reflecting their algebraic structure. Hopf \cite{Hopf1944} showed these graphs can have 0, 1, 2, or infinitely many ends, depending on the group's properties, and that finite groups are those with no ends, while the groups with two ends are precisely those that are virtually $\mathbf{Z}$. The landmark articles of Stallings \cite{Stallings68,Stallings71} classified finitely generated groups with more than one end by showing they are exactly the groups that  split as either an amalgamated free product or HNN extension over a finite group. 
It is well known that infinite discrete groups with property (T) and products of infinite finitely generated groups have exactly one end.

Later developments extended these ideas to quotient spaces and graphs, reflecting more intricate algebraic and geometric structures. For a closed subgroup $H\leq G$ of a locally compact Hausdorff group, Houghton \cite{Houghton} introduced the concept of ends of the pair $(G,H)$, deriving several fundamental results and characterizing the case of exactly two ends under some assumptions. Scott \cite{Scott:Ends} rephrased the definition for discrete groups in terms of almost invariant subsets of the Schreier coset graph $Sch(\Gamma,\Lambda, S)$. Later, Sageev \cite{Sageev:EndsOfPairs} connected multi-endedness of a pair of groups to the existence of an essential action on a cubing.  Niblo and Roller \cite{NibloRoller} made use of this reinterpretation to show that for every subgroup $\Lambda$ of a countable property (T) group $\Gamma$, the Schreier graph $Sch(\Gamma,\Lambda,S)$ has at most one end. We will provide a more direct proof of this fact as part of Corollary \ref{cor:discrete}.

Let $X$ be a locally compact second countable (lcsc) topological space and fix a Radon measure $\mu$ on it. We say a space $X$ has \textbf{exactly one infinite volume end} if, after removing any compact subset $K$, the space $X-K$ has exactly one connected component with infinite $\mu$-measure.
For finitely generated discrete groups, the Cayley graph is one-ended if and only if it has exactly one infinite volume end with respect to the counting measure. 

Recall that for a discrete subgroup $\Gamma\leq G$ of a lcsc group $G$, we have the homogeneous space of left cosets $G/\Gamma$. In this setting, there exists a $\sigma$-finite Radon measure on $G/\Gamma$ denoted by $\mu_{G/\Gamma}$ that is nonsingular with respect to the action of $G$ by left multiplication (see \cite[Appendix B]{KazhdanBook}). If $G$ is unimodular, this measure can and will be taken to be $G$-invariant. In the nonunimodular case, the measure $\mu_{G/\Gamma}$ is only nonsingular and not at all canonical, so the notion of infinite volume end depends on the chosen representative in the measure class. Throughout, statements about infinite volume ends in the nonunimodular setting are always with respect to a fixed choice of nonsingular $\mu_{G/\Gamma}$. Groups with property (T) are all automatically unimodular. 

\subsection{Path connected groups}
We now present our first theorem, which, in the case of path connected unimodular lcsc groups, provides a result about the number of infinite volume ends of the homogeneous space $G/\Gamma$ for certain discrete subgroups $\Gamma$.

\begin{thm}\label{thm:1volumetricEndPC}
    Let $G$ be a path connected lcsc group and let $\Gamma\leq G$ be a discrete subgroup that is not coamenable. Assume that one of the following holds: 
    \begin{enumerate}
        \item $G$ has property (T). 
        \item $G=\langle G_n\rangle_{n\in\mathbf{N}}$ is generated by a sequence of subgroups such that for every $n\in \mathbf{N}$
        \begin{enumerate}
            \item $G_n$ commutes with $G_{n+1}$ and
            \item $L^2(G/\Gamma,\mu_{G/\Gamma})$ does not admit any nonzero $G_n$-invariant vectors and 
            \item there exists $i$ such that $L^2(G/\Gamma,\mu_{G/\Gamma})$ does not admit $G_i$-almost invariant vectors.
        \end{enumerate}
    \end{enumerate}
    Then the space $G/\Gamma$ has exactly one infinite volume end with respect to $\mu_{G/\Gamma}$.

    Moreover, if $G$ is unimodular and there is a positive lower bound for the injectivity radius of $G/\Gamma$, then $G/\Gamma$ has exactly one end.
\end{thm}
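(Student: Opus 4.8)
For the final statement, assume $G$ is unimodular and that the injectivity radius of $X:=G/\Gamma$ is bounded below by some $r_0>0$, measured with respect to the fixed proper geodesic metric $d$ on $X$ (induced, via the covering map $\pi\colon G\to X$, from a one‑sided invariant proper geodesic metric on $G$ that descends to $X$; this is the metric implicit in the notion of injectivity radius). Note first that, by the part of the theorem already established, $X$ carries exactly one infinite‑volume end, so in particular $X$ is noncompact. The plan is then to use the injectivity‑radius bound to prove that \emph{every} noncompact end of $X$ has infinite volume, after which the ``classical'' and ``infinite‑volume'' notions of end agree and the result follows. The role of the hypothesis is exactly to rule out ``thin'' noncompact ends (cusp‑like pieces of finite volume), which is precisely what can go wrong without it.

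The first ingredient is a uniform lower bound on the volume of small balls. Since $\pi$ restricted to any open ball of radius $<r_0$ is an isometry onto the corresponding ball of $X$, and since the (bi‑invariant, as $G$ is unimodular) Haar measure of $G$ assigns one and the same positive mass $V(r)$ to every $r$‑ball of $G$, one gets, with $v_0:=V(r_0/2)>0$,
\[
\mu_{G/\Gamma}\bigl(B(x,r_0/2)\bigr)\ \geq\ v_0\qquad\text{for every }x\in X .
\]

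The key step is: for every compact $K\subseteq X$, every \emph{unbounded} connected component $C$ of $X\setminus K$ satisfies $\mu_{G/\Gamma}(C)=\infty$. Here $X$ is proper (being a quotient of a proper geodesic space by a properly discontinuous isometric action) and locally (path‑)connected, so $X\setminus K$ is open and its components are open. Since $C$ is unbounded and $K$ is bounded, $C$ contains a sequence $(x_n)$ with $d(x_n,K)\to\infty$; using properness (closed balls are compact) one extracts a subsequence, still called $(x_n)$, that is $r_0$‑separated. Once $d(x_n,K)>r_0/2$, the ball $B(x_n,r_0/2)$ is connected, disjoint from $K$, and meets $C$, hence is contained in $C$; these balls are pairwise disjoint, so $\mu_{G/\Gamma}(C)\geq\sum_n v_0=\infty$. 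I expect this to be the main obstacle: it combines the (mild but slightly delicate) point of fixing the metric on $G/\Gamma$ and the precise uniform‑local‑isometry statement underlying the previous display, with the elementary but fussy bookkeeping needed to trap disjoint $r_0/2$‑balls inside a single component far out along the end.

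It remains to put the pieces together. As $\mu_{G/\Gamma}$ is Radon it is finite on compact sets, and as $X$ is proper a bounded component of $X\setminus K$ has precompact closure and hence finite measure; combined with the key step this shows that, for each compact $K$, the unbounded components of $X\setminus K$ are exactly the components of infinite $\mu_{G/\Gamma}$‑measure. By the already‑proved conclusion there is exactly one such component for every $K$, so the number of ends of $X$, being the supremum over compact $K$ of the number of unbounded components of $X\setminus K$, equals $1$. Thus $X$ has exactly one end.
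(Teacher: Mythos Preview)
Your proposal addresses only the ``moreover'' clause, taking the main conclusion (exactly one infinite volume end) as already established. The paper deduces the full theorem from Proposition~\ref{prop:vanishingCoh} (hypotheses (1) or (2) force $H^1(G,\kappa)=0$) together with Theorem~\ref{thm:pc}; the main clause is argued by contraposition: two infinite volume ends yield a set $A$ with compact boundary and $\mu(A)=\mu(A^c)=\infty$ (Observation~\ref{obs:ends}), path-connectedness of $G$ makes $A\triangle gA$ relatively compact for every $g$ (Lemma~\ref{lem:cmpctBoundaryImpAlmStab}), and this produces a nonzero class in $H^1(G,\kappa)$ (Proposition~\ref{prop:almStabMapImpNonvanKoopCoh}). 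You supply none of this, so as a proof of the stated theorem the proposal is incomplete.

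For the ``moreover'' clause itself, your strategy is essentially the paper's Lemma~\ref{lem:injRadiusEndsAreVolumetric}: under the injectivity-radius bound, every non-relatively-compact connected component of $G/\Gamma\setminus K$ has infinite measure, because one can pack it with infinitely many disjoint sets of uniformly positive measure. The paper uses translates $Ux$ of a fixed symmetric identity neighborhood $U$ and a maximal packing; you use metric balls $B(x_n,r_0/2)$ around an $r_0$-separated sequence. The core idea is the same.

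Your packaging, however, introduces a genuine gap. You posit a one-sided invariant proper \emph{geodesic} metric on $G$ that descends to $G/\Gamma$. For a general path-connected lcsc group this is not clearly available: the result cited in the paper \cite{HaagPrzy2006} provides a proper left-invariant metric, not a geodesic one, and you rely on geodesicity both to get connected balls (needed for $B(x_n,r_0/2)\subseteq C$) and to assert properness of $G/\Gamma$. There is also a left/right mismatch to handle: a left-invariant metric does not make the right $\Gamma$-action isometric, so it need not descend to $G/\Gamma$ as you describe, while a right-invariant metric descends but then the orbit-map formulation of injectivity radius used in the paper and your covering-map formulation must be reconciled. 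The paper sidesteps all of this by working directly with $Ux$ for a symmetric identity neighborhood $U$, using only the metric-independent form of the injectivity-radius hypothesis; since $G$ is path-connected one may take $U$ connected, whence $Ux$ is connected and $Ux_i\subseteq A$ follows without any geodesic assumption. Replacing your metric balls by such $Ux$ repairs the argument and recovers the paper's proof.
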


Corlette \cite{Corlette1990} previously showed that for convex cocompact subgroups $\Gamma$ of  $G=F_4^{(-20)}$ or $G=Sp(n,1)$ (for $n \geq 2$), the quotient space $G/\Gamma$ has at most one end. These cases are naturally covered by the property (T) version of Theorem \ref{thm:1volumetricEndPC}, since convex cocompact manifolds have a positive lower bound on their injectivity radius. More generally, Theorem \ref{thm:1volumetricEndPC} implies that if $\Gamma$ is an Anosov subgroup of a higher rank simple Lie group $G$, then $G/\Gamma$ has exactly one end, as does the locally symmetric space $K\backslash G/\Gamma$ where $K$ is a maximal compact subgroup of $G$. 

Condition (2b) which appears in the statement of Theorems \ref{thm:1volumetricEndPC} and \ref{thm:1VolumetricEndGraph} as well as Corollary \ref{cor:discrete} may seem slightly esoteric. 
If $\Gamma$ is a lattice of a product group $G_1\times \cdots \times G_k$, then $L^2_0(G/\Gamma, \mu_{G/\Gamma})$ admitting no $G_n$-invariant vectors is equivalent to $\Gamma$ being irreducible, in the sense that the projection $pr_n \Gamma\leq G_n$ is dense for every $1\leq n\leq k$.
Absent the assumption of finite covolume, irreducibility of $\Gamma$ still implies condition (2b).

\subsection{Symmetric spaces}

Theorem \ref{thm:1volumetricEndPC} has immediate implications for a wide range of geometric settings. In particular, it applies directly to symmetric spaces of noncompact type, as detailed in the following corollaries. For background about symmetric spaces and the terminology we use, see \cite{HelgasonBook}.

Denote by $\mathbf{H}_\mathbf{R}^n$ real hyperbolic $n$-space and by $\mathbf{H}_\mathbf{C}^n$ complex hyperbolic $n$-space. Since the isometry groups of the remaining irreducible symmetric spaces of noncompact type all have property (T), we have the following. 
\begin{cor}
    Let $X$ be a connected Riemannian symmetric space of noncompact type with no factors of type $\mathbf{H}_{\mathbf{R}}^n$ or $\mathbf{H}_\mathbf{C}^n$. Let $\Gamma\leq \mathrm{Isom(X)}$ be an infinite covolume discrete subgroup. 
    Then $\Gamma\backslash X$ has precisely one infinite volume end. 
\end{cor}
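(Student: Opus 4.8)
The plan is to realize $X$ as a homogeneous space of its connected isometry group and apply the property (T) case of Theorem~\ref{thm:1volumetricEndPC}. Set $G:=\mathrm{Isom}(X)^{\circ}$, the identity component; this is a connected semisimple Lie group acting transitively on $X$, hence path connected and lcsc, and a choice of basepoint gives $X\cong G/K$ for a (necessarily connected) maximal compact subgroup $K\leq G$. Writing $X=X_1\times\cdots\times X_k$ for the de Rham decomposition into irreducible symmetric spaces of noncompact type, one has $G\cong\mathrm{Isom}(X_1)^{\circ}\times\cdots\times\mathrm{Isom}(X_k)^{\circ}$, with each factor a connected simple Lie group of noncompact type. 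Since by hypothesis no $X_i$ is of type $\mathbf{H}_{\mathbf{R}}^n$ or $\mathbf{H}_{\mathbf{C}}^n$, each $\mathrm{Isom}(X_i)^{\circ}$ is either of real rank at least $2$, or locally isomorphic to $\mathrm{Sp}(n,1)$ or $F_4^{-20}$; in every such case it has property (T) (see \cite{KazhdanBook}), and property (T) is inherited by finite direct products, so $G$ has property (T). In particular $G$ is unimodular.

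Next I would pass to a subgroup of $G$. As $X$ is of noncompact type, $\mathrm{Isom}(X)$ has finitely many components, so $G$ has finite index in $\mathrm{Isom}(X)$ and $\Gamma_0:=\Gamma\cap G$ has finite index in $\Gamma$; thus $\Gamma_0$ is a discrete subgroup of $G$, and since having finite covolume is a finite-index invariant, $\Gamma_0$ has infinite covolume in $G$. I would then verify that $\Gamma_0$ is not coamenable in $G$: if it were, $L^2(G/\Gamma_0,\mu_{G/\Gamma_0})$ would weakly contain the trivial representation, hence by property (T) contain a nonzero $G$-invariant vector, which — as $G$ is unimodular and $\mu_{G/\Gamma_0}$ is the invariant measure — is a nonzero constant, forcing $G/\Gamma_0$ to have finite volume and contradicting infinite covolume. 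Theorem~\ref{thm:1volumetricEndPC}(1) then applies and $G/\Gamma_0$ has exactly one infinite volume end; via the inversion homeomorphism $g\Gamma_0\mapsto\Gamma_0 g^{-1}$, which is measure preserving by unimodularity, the same holds for $\Gamma_0\backslash G$.

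It remains to push this conclusion down along $\Gamma_0\backslash G\to\Gamma_0\backslash X\to\Gamma\backslash X$. The first map $\Gamma_0\backslash G\to\Gamma_0\backslash G/K=\Gamma_0\backslash X$ is a proper surjection with connected compact fibers homeomorphic to $K$, satisfying $\mu(p^{-1}(A))=\mathrm{vol}(K)\cdot\mu(A)$ for the $G$-invariant measures; the second map $\Gamma_0\backslash X\to\Gamma\backslash X$ is a proper finite-to-one surjection with $c\,\mu(A)\leq\mu(q^{-1}(A))\leq C\,\mu(A)$ for fixed $c,C>0$. For any such proper, volume-proportional map $f\colon Y\to Z$, the property ``exactly one infinite volume end'' passes from $Y$ to $Z$: for compact $K'\subseteq Z$ the preimage is compact, $f^{-1}$ sends each component of $Z-K'$ to a clopen subset of $Y-f^{-1}(K')$, i.e.\ a union of components, and volume proportionality means a set has infinite volume iff its preimage does; hence the component of $Z-K'$ meeting $f(C)$ (where $C$ is the unique infinite-volume component of $Y-f^{-1}(K')$) has infinite volume, while a second infinite-volume component of $Z-K'$ would produce, via its infinite-volume preimage, a second infinite-volume component of $Y-f^{-1}(K')$, a contradiction. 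Applying this twice gives the corollary.

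The one genuinely new input beyond Theorem~\ref{thm:1volumetricEndPC} is the identification of $\mathrm{Isom}(X)^{\circ}$ as a property (T) group, and this is exactly where the exclusion of real and complex hyperbolic factors is used (through Kazhdan's theorem in higher rank and the Kostant-type results for $\mathrm{Sp}(n,1)$ and $F_4^{-20}$). Everything else is routine bookkeeping; the only mild subtlety is that neither $\Gamma_0\backslash G\to\Gamma_0\backslash X$ nor $\Gamma_0\backslash X\to\Gamma\backslash X$ need be a covering map in the usual sense (orbifold points are possible for $\Gamma_0\backslash X\to\Gamma\backslash X$), so the transfer of the end count must be argued via properness and comparability of volumes rather than local triviality.
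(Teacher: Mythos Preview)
Your argument is correct and matches the paper's intended route: the corollary is presented there as an immediate consequence of Theorem~\ref{thm:1volumetricEndPC}(1) once one observes that the isometry groups of the remaining irreducible symmetric spaces of noncompact type all have property~(T), with the descent from $G/\Gamma_0$ to $\Gamma\backslash X$ left implicit. One small caveat: your transfer principle as stated (proper plus volume-comparable) is not quite enough in general, since the preimage of a component of $Z-K'$ could a priori be an infinite union of finite-volume components of $Y-f^{-1}(K')$ with infinite total volume; however, in your two maps the fibers are respectively connected (a principal $K$-bundle) and of uniformly bounded finite cardinality (quotient by the finite group $\Gamma/\Gamma_0$, noting $\Gamma_0\trianglelefteq\Gamma$ since $G\trianglelefteq\mathrm{Isom}(X)$), so the preimage of each component is a finite union of components and the argument goes through.
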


For locally symmetric manifolds $M=\Gamma\backslash X$ of the type above, ``one infinite-volume end'' means that after removing any compact set $K$, one component of $M\setminus K$ has infinite
Riemannian volume; the remaining noncompact components all must have finite volume and thus represent cusps. If there is a uniform lower bound on the injectivity radius (e.g., for Anosov/convex cocompact $\Gamma$), Lemma \ref{lem:injRadiusEndsAreVolumetric} identifies infinite volume ends with topological ends, so in this case $M$ is one ended as a manifold.

By applying case (2) of Theorem \ref{thm:1volumetricEndPC}, we obtain the following. 
\begin{cor}
    Let $X=X_1\times X_2$ be a product of connected Riemannian symmetric spaces of noncompact type. Assume $\Gamma\leq \mathrm{Isom}(X)$ is an irreducible discrete subgroup which is not coamenable. 
    Then $\Gamma\backslash X$ has precisely one infinite volume end. 
\end{cor}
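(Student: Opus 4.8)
The plan is to deduce this from case~(2) of Theorem~\ref{thm:1volumetricEndPC} applied to a suitable homogeneous space, and then to transport the conclusion from a quotient of the group down to the quotient of $X$. Concretely, I would set $G_i=\mathrm{Isom}(X_i)^{\circ}$, so that $G:=\mathrm{Isom}(X)^{\circ}=G_1\times G_2$ is a connected, hence path connected, semisimple Lie group acting transitively on $X=X_1\times X_2$ with maximal compact stabilizer $K=K_1\times K_2$, and $X\cong G/K$ as Riemannian manifolds. Since $\mathrm{Isom}(X)^{\circ}$ has finite index in $\mathrm{Isom}(X)$, the subgroup $\Gamma_0:=\Gamma\cap G$ has finite index in $\Gamma$; it is discrete, it remains not coamenable in $G$, and it remains irreducible because each $G_i$ is connected, so the closure of $pr_i(\Gamma_0)$ is a finite-index closed subgroup of $G_i$ and therefore all of $G_i$.

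Next I would verify the hypotheses of Theorem~\ref{thm:1volumetricEndPC}(2) for the pair $(G,\Gamma_0)$, using the generating sequence $G_1,G_2,G_1,G_2,\dots$. Consecutive terms commute, since they are the factors of a direct product, giving (2a), and they generate $G$. Condition (2b), that $L^2(G/\Gamma_0,\mu_{G/\Gamma_0})$ has no nonzero $G_i$-invariant vectors for $i=1,2$, is exactly the implication ``irreducibility $\Rightarrow$ (2b)'' recorded in the discussion following Theorem~\ref{thm:1volumetricEndPC}. The remaining condition (2c) is the delicate one: I must produce one of $G_1,G_2$ with no almost invariant vectors in $L^2(G/\Gamma_0)$. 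This cannot come from irreducibility alone; it should be extracted from non-coamenability. The approach I would take is to assume, for contradiction, that $L^2(G/\Gamma_0)$ has both $G_1$-almost invariant and $G_2$-almost invariant vectors and to combine them into genuine $G$-almost invariant vectors (contradicting non-coamenability), using the commuting structure together with the Howe--Moore phenomenon, which applies to $L^2(G/\Gamma_0)$ as a $G_i$-representation because (2b) rules out $G_i$-invariant vectors and each $G_i$ is semisimple. Granting this, all hypotheses of Theorem~\ref{thm:1volumetricEndPC}(2) are met, so $G/\Gamma_0$ has exactly one infinite volume end with respect to $\mu_{G/\Gamma_0}$, which is the Haar measure since $G$ is unimodular.

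To finish I would descend in two steps. First, inversion identifies $G/\Gamma_0$ with $\Gamma_0\backslash G$ as measured spaces, and the orbit map $p\colon \Gamma_0\backslash G\to \Gamma_0\backslash G/K=\Gamma_0\backslash X$ is proper, open and surjective with compact connected fibers (continuous images of $K$); for such a map the preimage of a connected set is connected, so $p^{-1}$ is a bijection between the components of $(\Gamma_0\backslash X)\setminus C$ and those of $(\Gamma_0\backslash G)\setminus p^{-1}(C)$ for every compact $C$, while the integration formula for the Riemannian submersion $\Gamma_0\backslash G\to\Gamma_0\backslash X$ (whose fibers have uniformly bounded volume) shows that a component and its preimage have finite, or infinite, volume together; hence $\Gamma_0\backslash X$ has exactly one infinite volume end. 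Second, $\Gamma\backslash X=(\Gamma/\Gamma_0)\backslash(\Gamma_0\backslash X)$ is the quotient of $\Gamma_0\backslash X$ by the finite group $\Gamma/\Gamma_0$ acting by volume-preserving isometries; this map is finite-to-one and multiplies volumes by $[\Gamma:\Gamma_0]$, so a component of $(\Gamma\backslash X)\setminus C$ has infinite volume exactly when its full preimage does, from which a short argument shows that a single infinite volume component upstairs forces a single one downstairs. This gives the statement.

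The step I expect to be the main obstacle is the verification of (2c): isolating the trivial representation of one factor inside the quasi-regular representation $L^2(G/\Gamma_0)$. Since the factors need not have property (T) (for instance when $X=\mathbf{H}^2_{\mathbf{R}}\times\mathbf{H}^2_{\mathbf{R}}$), one cannot invoke a spectral gap for either $G_i$ directly, and the full weight of the hypothesis ``not coamenable'' must be used here; making the ``assemble almost invariant vectors of the two factors into almost invariant vectors of $G$'' argument rigorous, rather than merely plausible from the direct-integral picture over $\widehat{G_1}\times\widehat{G_2}$, is where the real work lies.
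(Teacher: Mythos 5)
Your overall route---reduce to case (2) of Theorem \ref{thm:1volumetricEndPC} for $G=\mathrm{Isom}(X_1)^{\circ}\times\mathrm{Isom}(X_2)^{\circ}$ and $\Gamma_0=\Gamma\cap G$, get (2a) for free and (2b) from irreducibility, then descend from $G/\Gamma_0$ to $\Gamma_0\backslash X$ and on to $\Gamma\backslash X$---is the same as the paper's, and the descent details you supply (connected preimages under the proper open map with compact connected fibers, the fiberwise volume comparison, the final quotient by the finite group $\Gamma/\Gamma_0$) are correct and are simply left implicit in the paper.

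The genuine gap is exactly the one you flag: condition (2c). You do not prove it, and the strategy you sketch would not work as stated. The assertion that $G_1$-almost invariant and $G_2$-almost invariant vectors in a unitary representation of $G_1\times G_2$ can be assembled into $G$-almost invariant vectors is false for general representations of products: already for $G_1=G_2=\mathbf{Z}$, the multiplication representation on $L^2(\mathbf{T}^2,\mu)$ with $\mu$ charging the two sequences $(e^{i/n},-1)$ and $(-1,e^{i/n})$ has almost invariant (and no invariant) vectors for each factor, yet $(1,1)$ is not in the closed support of $\mu$, so the trivial representation of the product is not weakly contained. Nor does Howe--Moore close this gap: it gives decay of individual matrix coefficients of representations without invariant vectors, which is perfectly compatible with the presence of almost invariant vectors (e.g.\ $SL_2(\mathbf{R})$ satisfies Howe--Moore and fails property (T)), so it cannot by itself manufacture the spectral gap for one factor that (2c) demands. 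The paper resolves this differently: by the Remark following Proposition \ref{prop:commutingRigid}, when $G$ is an honest product the hypothesis (c) is not needed at all, since $H^1(G,\kappa)=0$ already follows from (a) and (b) by Shalom's results on the first cohomology of product groups (the cited Corollary 1.8 and Proposition 3.2 of Shalom 2000). In other words, the corollary is obtained by \emph{dropping} (2c) for products rather than verifying it; your proof remains incomplete until you either import that input or find a genuine argument for (2c), which, as you suspected, is where the real difficulty lies.
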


\subsection{Groups acting on graphs}
Next, we consider the case where an lcsc group $G$ acts on a locally finite, connected graph $X$ by automorphisms with compact vertex stabilizers. This setup includes compactly generated totally disconnected lcsc groups, which act on their Cayley-Abels graphs \cite{KronMoller2007}. Additionally, affine buildings associated to reductive algebraic groups over local non-Archimedean fields provide further examples of such actions.

In this context, we compute the number of infinite volume ends of the quotient graph $X/\Gamma$ under identical rigidity assumptions as in the path-connected case. The measure on the graph $X/\Gamma$ is not, in general, the counting measure as one might expect. Let $v_0\in V(X)$ be a distinguished vertex. The map $\phi:G\to X$ given by $g\mapsto g. v_0$ descends to a map $\varphi: G/\Gamma\to X/\Gamma$ on the quotient spaces. The measure we use is the pushforward measure $\varphi_*\mu_{G/\Gamma}$. If $G$ is nonunimodular, the pushforward $\varphi_*\mu_{G/\Gamma}$ depends on the fixed choice of nonsingular measure $\mu_{G/\Gamma}$ and all infinite volume statements are with respect to that fixed choice.

\begin{thm} \label{thm:1VolumetricEndGraph}
    Let $G$ be an lcsc group and let $\Gamma\leq G$ be a discrete subgroup that is not coamenable. Assume $G$ acts on a connected locally finite graph $X$ by automorphisms with compact vertex stabilizers and the action on the vertices is transitive.
    
    Assume that one of the following holds: 
    \begin{enumerate}
        \item $G$ has property (T). 
        \item $G=\langle G_n\rangle_{n\in \mathbf{N}}$ is generated by a sequence of subgroups such that for every $n\in \mathbf{N}$
        \begin{enumerate}
            \item $G_n$ commutes with $G_{n+1}$ and
            \item $L^2(G/\Gamma,\mu_{G/\Gamma})$ does not admit any nonzero $G_n$-invariant vectors and 
            \item there exists $i$ such that $L^2(G/\Gamma,\mu_{G/\Gamma})$ does not admit $G_i$-almost invariant vectors.
        \end{enumerate}
    \end{enumerate}
    Then the graph $X/\Gamma$ has exactly one infinite volume end with respect to $\varphi_*\mu_{G/\Gamma}$.

    Moreover, if $G$ is unimodular and $\Gamma$ admits a finite index subgroup $\Gamma'\leq \Gamma$ such that $\Gamma'\acts X$ is free, then $X/\Gamma$ has exactly one end. 
\end{thm}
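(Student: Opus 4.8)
The plan is to show that the existence of two infinite volume ends of $X/\Gamma$ produces, from a suitable $\Gamma$-invariant partition of $V(X)$, a $1$-cocycle with values in the quasi-regular representation $\pi$ of $G$ on $L^2(G/\Gamma,\mu_{G/\Gamma})$ which is not a coboundary, contradicting either property (T) or the commuting structure. First I would record the reductions. Let $K$ be the stabilizer of a base vertex $v_0$; it is compact by hypothesis and open since $G$ acts continuously on the discrete set $V(X)$, so $V(X)\cong G/K$. Then $X/\Gamma$ is a connected locally finite graph and $\varphi_*\mu_{G/\Gamma}$ is supported on vertices, assigning to the class of $v$ a weight proportional to $|\operatorname{Stab}_\Gamma(v)|^{-1}$, a positive finite number as $\Gamma$ is discrete and $K$ is compact. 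Since $\Gamma$ is not coamenable it is not a lattice, so $\mu_{G/\Gamma}$, and hence $\varphi_*\mu_{G/\Gamma}$, has infinite total mass; a compact subset of $X/\Gamma$ lies in a finite subgraph, and removing it from the locally finite connected graph $X/\Gamma$ leaves finitely many components, which cannot all have finite volume, so there is at least one infinite volume end. Assuming there were two, I would fix a vertex set $\bar A\subseteq V(X/\Gamma)$ with finite edge boundary such that $\bar A$ and its complement both have infinite volume, and pass to its $\Gamma$-invariant preimage $A\subseteq V(X)$, whose edge boundary $\partial A$ is then a union of finitely many $\Gamma$-orbits of edges.

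Next I would construct the cocycle. Regard $\psi:=\mathbf 1_A$ as a right-$K$-invariant $\{0,1\}$-valued function on $G/\Gamma$; by the volume hypotheses neither $\psi$ nor $1-\psi$ lies in $L^2(G/\Gamma)$. Nonetheless, for each $g\in G$ the set where $\pi(g)\psi$ and $\psi$ disagree maps under $\varphi$ into the set of vertices of $X/\Gamma$ within graph distance $d(v_0,g.v_0)$ of the image of $\partial A$, which is finite because $\partial A$ consists of finitely many $\Gamma$-orbits and $X$ is locally finite; hence $c(g):=\pi(g)\psi-\psi\in L^2(G/\Gamma)$. This $c$ satisfies the cocycle identity for $\pi$ and is continuous, being locally constant (it vanishes on the open subgroup $K$). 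The endgame is then routine: once $c$ is shown to be a coboundary $c(g)=\pi(g)\xi-\xi$ with $\xi\in L^2(G/\Gamma)$, the function $\psi-\xi$ is $\pi(G)$-invariant, hence $\mu_{G/\Gamma}$-a.e.\ constant by ergodicity of $G\curvearrowright G/\Gamma$; writing $\psi=\xi+c_0$ and using that $\psi$ is $\{0,1\}$-valued, $\xi\in L^2$, and the total volume is infinite forces $c_0\in\{0,1\}$, whence $\bar A$ (if $c_0=0$) or $\bar A^c$ (if $c_0=1$) has finite volume, contradicting the choice of $\bar A$.

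So the crux is to prove that $c$ is a coboundary. In case (1) this is immediate: property (T) gives $H^1(G,\pi)=0$ by the Delorme--Guichardet theorem. In case (2) I would use a Shalom-type argument. If subgroups $H,H'$ commute and $\pi|_H$ has no almost invariant vectors, then $c|_{H'}$ is bounded: commutativity yields $(1-\pi(h'))c(h)=(1-\pi(h))c(h')$, so $\|(1-\pi(h))c(h')\|\le 2\|c(h)\|$ for $h\in H$ and $h'\in H'$, and choosing a compact generating set $Q\subseteq H$ with $\sup_{h\in Q}\|(1-\pi(h))v\|\ge\varepsilon\|v\|$ bounds $\|c(h')\|$ uniformly by $\tfrac2\varepsilon\sup_{h\in Q}\|c(h)\|<\infty$ (finite since $Q$ is compact and $c$ continuous). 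Applying this with $H=G_i$ (from (2c)) and $H'=G_{i\pm1}$ shows $c|_{G_{i+1}}$ is bounded, hence equal to $\pi(\cdot)\eta-\eta$ for some $\eta\in L^2(G/\Gamma)$ by the circumcenter lemma; after replacing $c$ by the cohomologous cocycle $c-(\pi(\cdot)\eta-\eta)$ I may assume $c|_{G_{i+1}}=0$. Then for any $G_m$ commuting with $G_{i+1}$ and any $a\in G_m$, the relation $ag=ga$ for $g\in G_{i+1}$ forces $c(a)$ to be $G_{i+1}$-invariant, hence $c(a)=0$ by (2b); iterating along the chain in both directions, using (2a) to pass between consecutive subgroups and (2b) to kill the invariant vector produced at each step, gives $c|_{G_n}=0$ for all $n$, so $c$ vanishes on the generating set $\bigcup_n G_n$ and is a coboundary.

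For the ``moreover'' statement, if $\Gamma'\le\Gamma$ has finite index and acts freely on $X$, then each $\operatorname{Stab}_\Gamma(v)$ meets $\Gamma'$ trivially, hence embeds in $\Gamma'\backslash\Gamma$ and has order at most $[\Gamma:\Gamma']$; so the vertex weights of $\varphi_*\mu_{G/\Gamma}$ are bounded above and below, a subset of $X/\Gamma$ has infinite volume exactly when it is infinite, and compact subsets are exactly the finite subgraphs — so infinite volume ends coincide with the usual topological ends of the graph, and $X/\Gamma$ has exactly one end. I expect the main obstacle to be the second step: verifying that $c(g)\in L^2(G/\Gamma)$ for every $g$ rather than merely for generators (which relies on $\partial A$ being finitely many $\Gamma$-orbits together with local finiteness of $X$), and, in case (2), the bookkeeping that propagates the vanishing of $c$ along the entire generating chain.
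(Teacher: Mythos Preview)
Your argument is correct and follows essentially the same route as the paper: it packages into one narrative the steps the paper splits into Observation~\ref{obs:ends}, Lemma~\ref{lem:cutImpliesCohGraph}, Proposition~\ref{prop:almStabMapImpNonvanKoopCoh}, and Propositions~\ref{prop:commutingRigid}/\ref{prop:vanishingCoh}. The only notable variation is in the ``moreover'' clause: the paper shows directly, via a fundamental-domain computation inside Lemma~\ref{lem:cutImpliesCohGraph}, that the pulled-back set $B\subseteq G/\Gamma$ has infinite and coinfinite measure, whereas you instead observe that the finite-index free action bounds the vertex weights of $\varphi_*\mu_{G/\Gamma}$ above and below, so topological ends and infinite volume ends coincide and the first part applies. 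Both arguments are fine; yours is a bit more conceptual. Two small slips worth cleaning up: in case~(2) the compact set $Q\subseteq G_i$ witnessing the spectral gap need not be generating, and you should verify continuity of $c$ via left $K$-invariance of $B=\varphi^{-1}(\bar A)$ using the correct convention for $\varphi$.
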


In related work, Bass and Lubotzky \cite{BassLubotzky2001} have studied extensively the situation where $X$ is a tree. This case is essentially orthogonal to the group $G$ having property (T) or being generated by commuting subgroups.

A direct application of Theorem \ref{thm:1VolumetricEndGraph} computes the number of ends of Schreier graphs.

\begin{cor}\label{cor:discrete}
    Let $\Gamma=\langle S\rangle$ be a countable discrete group with finite symmetric generating set $S$ and let $\Lambda\leq \Gamma$ be a subgroup which is not coamenable. 
    Assume that one of the following holds:  
    \begin{enumerate}
        \item $\Gamma$ has property (T). 
        \item $\Gamma=\langle \Gamma_n\rangle_{n\in\mathbf{N}}$ is generated by a sequence of subgroups such that for every $n\in\mathbf{N}$
        \begin{enumerate}
            \item $\Gamma_n$ commutes with $\Gamma_{n+1}$ and
            \item $\ell^2(\Gamma/\Lambda)$ does not admit any nonzero $\Gamma_n$-invariant vectors and 
            \item there exists $i$ such that $\ell^2(\Gamma/\Lambda)$ does not admit $\Gamma_i$-almost invariant vectors.
        \end{enumerate}
    \end{enumerate}
    Then the Schreier graph $Sch(\Gamma,\Lambda,S)$ has exactly one end. 
\end{cor}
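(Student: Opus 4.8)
The plan is to deduce the corollary directly from Theorem~\ref{thm:1VolumetricEndGraph}, applied to the action of $\Gamma$ on its own Cayley graph. Set $X := Cay(\Gamma, S)$, the graph with vertex set $\Gamma$ and an edge joining $g$ and $gs$ for every $g \in \Gamma$ and $s \in S$. Since $S$ is a finite symmetric generating set, $X$ is connected and locally finite, and $\Gamma$, viewed with the discrete topology and hence as an lcsc (and unimodular) group, acts on $X$ by graph automorphisms via left multiplication, simply transitively on the vertex set; in particular the vertex stabilizers are trivial and so compact. I would then invoke Theorem~\ref{thm:1VolumetricEndGraph} with ``$G$'' equal to $\Gamma$ and with the distinguished discrete subgroup ``$\Gamma$'' of that statement taken to be $\Lambda$. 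Because $\Gamma$ is discrete, the homogeneous space occurring there is $\Gamma/\Lambda$ with its invariant (counting) measure, so $L^2(\Gamma/\Lambda,\mu_{\Gamma/\Lambda})$ is just $\ell^2(\Gamma/\Lambda)$; under this identification the two alternatives in the corollary are precisely the two alternatives in the theorem (property~(T) for $\Gamma$ in case~(1); the commuting sequence $(\Gamma_n)_n$ with no nonzero $\Gamma_n$-invariant vectors and, for some $i$, no $\Gamma_i$-almost invariant vectors in $\ell^2(\Gamma/\Lambda)$ in case~(2)), while ``$\Lambda$ not coamenable in $\Gamma$'' is exactly the standing hypothesis on the subgroup.

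With the hypotheses matched, Theorem~\ref{thm:1VolumetricEndGraph} yields that $X/\Lambda$ has exactly one infinite volume end for the measure $\varphi_*\mu_{\Gamma/\Lambda}$. To pass to topological ends I would use the ``moreover'' clause of the same theorem: $\Gamma$ is unimodular (being discrete) and $\Lambda$ acts freely on the vertices of $X$ (left multiplication by $\lambda\neq e$ fixes no vertex), so taking the required finite-index subgroup to be $\Lambda$ itself, the clause applies and shows that $X/\Lambda$ has exactly one end. It then remains to identify $X/\Lambda$ with the Schreier graph: quotienting $Cay(\Gamma,S)$ by the left $\Lambda$-action produces the graph whose vertices are the cosets of $\Lambda$ in $\Gamma$, with an edge between the classes of $g$ and $gs$ for each $s \in S$, i.e.\ $Sch(\Gamma,\Lambda,S)$. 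Since $\Lambda$ is not coamenable it has infinite index, so $Sch(\Gamma,\Lambda,S)$ is infinite and ``exactly one end'' is not vacuous; in case~(1) this recovers, and sharpens from ``at most one'' to ``exactly one'', the result of Niblo and Roller. Alternatively one can bypass the moreover clause: since $\phi\colon\Gamma\to V(X)$ is a bijection, $\varphi$ is a bijection on vertices, so $\varphi_*\mu_{\Gamma/\Lambda}$ is the counting measure on $Sch(\Gamma,\Lambda,S)$, and for a connected locally finite graph ``exactly one infinite volume end for the counting measure'' coincides with ``exactly one end'' because the complement of any finite set has only finitely many infinite components, each meeting the (finite) boundary of that set.

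There is no real analytic difficulty here: the corollary is a specialization of Theorem~\ref{thm:1VolumetricEndGraph}, and what remains is bookkeeping. The only points I would be careful about are the left/right conventions, so that the quotient of the Cayley graph by $\Lambda$ is genuinely the Schreier coset graph as the paper defines it and the descended map $\varphi$ is the identity on vertices; and, should the paper's notion of a free action on a graph exclude edge inversions, checking that $\Lambda$ acts on $Cay(\Gamma,S)$ without inversions, or else replacing $X$ by its barycentric subdivision, which affects neither the number of ends of the quotient nor any of the representation-theoretic hypotheses. I expect both to be routine.
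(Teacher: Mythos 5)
Your proposal is correct and is exactly the argument the paper intends: the corollary is stated as a direct application of Theorem \ref{thm:1VolumetricEndGraph} to $G=\Gamma$ acting on its Cayley graph $X=Cay(\Gamma,S)$ with the subgroup $\Lambda$, using the ``moreover'' clause (or, equivalently, your counting-measure observation) to pass from infinite volume ends to topological ends. Your care about left/right coset conventions and edge inversions is reasonable bookkeeping but introduces no new ideas beyond what the paper leaves implicit.
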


Case (1) was previously proven through the work of Niblo and Roller \cite{NibloRoller} by applying the results of Sageev \cite{Sageev:EndsOfPairs}, but we provide an alternate direct proof. Note that outside of product groups, case (2) applies to certain discrete subgroups of any graph product of infinite groups arising from a connected graph. 

\medskip
\noindent {\bf Acknowledgements:} The author warmly thanks Sami Douba, Miko\l aj Fraczyk, and Robin Tucker-Drob  for several suggestions and many stimulating discussions. THe author also thanks an anononymous reviewer for helpful feedback. KW was supported by the Dioscuri program initiated by the Max Planck Society, jointly managed by the National Science Centre (Poland), and mutually funded by the Polish Ministry of Science and Higher Education and the German Federal Ministry of Education and Research.

\section{Preliminaries}

We work in the setting of lcsc topological spaces. In particular, such spaces admit an exhaustion by compact sets.
\subsection{Ends} Given an lcsc space $X$, fix an increasing sequence of compact sets $K_n$, for $n\in\mathbf{N}$, whose interiors cover $X$. 
An \textbf{end} is then a sequence of open sets $(U_n)_{n\in \mathbf{N}}$ such that $U_n$ is a connected component of $X-K_n$ and $U_{n+1}\subseteq U_n$ for all $n\in \mathbf{N}$. 
The \textbf{space of ends} $\mathcal{E}$ is then the set of all ends and comes with an ultrametric $d((U_i),(V_i))=2^{-\min\{i:U_i\not=V_i\}}$ that depends on the sequence $K_n$. 
Given two different compact exhaustions of $X$, the respective spaces of ends are easily shown to be homeomorphic, although the homeomorphism cannot be metric preserving in general. 

Assume now our lcsc space $X$ comes equipped with a Radon measure $\mu$ and a compact exhaustion $K_n$.
We say an end $(U_n)_{n\in \mathbf{N}}$ has \textbf{infinite volume} if $\mu(U_n)=\infty$ for all $n\in \mathbf{N}$.
The \textbf{space of infinite volume ends} $\mathcal{E}_\mu\subseteq \mathcal{E}$ is then the subset of ends consisting of infinite volume ends along with the subspace topology. Once more, since the measure is assumed Radon, up to homeomorphism the space of infinite volume ends $\mathcal{E}_\mu$ does not depend on the choice of compact exhaustion. By restricting ourselves to this class of ends we forget about regions that are entirely finite in measure, despite the fact that they may be topologically unbounded. Such finite regions are often called cusps in the study of locally symmetric spaces.

The only property of ends that we will use in our analysis is the following observation.
\begin{obs}\label{obs:ends}
    If $(X,\mu)$ has more than one infinite volume end, then there exists a set $A\subseteq X$ with compact boundary such that $\mu(A)=\mu(X-A)=\infty$.
\end{obs}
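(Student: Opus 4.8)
The plan is to read the desired set $A$ directly off of two distinct infinite volume ends, so that essentially no work beyond unwinding the definitions is required. Suppose $(X,\mu)$ has at least two infinite volume ends; recalling that, up to homeomorphism, the space of infinite volume ends is independent of the chosen compact exhaustion, we may work with the fixed exhaustion $(K_n)_{n\in\mathbf{N}}$ and produce two infinite volume ends $(U_n)_{n\in\mathbf{N}}$ and $(V_n)_{n\in\mathbf{N}}$ with $(U_n)\neq (V_n)$. By definition of the ultrametric on $\mathcal{E}$ there is then an index $N$ with $U_N\neq V_N$. Since $U_N$ and $V_N$ are both connected components of $X-K_N$, and distinct connected components are disjoint, we have $U_N\cap V_N=\emptyset$.

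I would then take $A:=U_N$ and verify the two assertions. For the measure condition: $\mu(A)=\mu(U_N)=\infty$ because $(U_n)$ is an infinite volume end, and since $V_N\subseteq X-U_N=X-A$ we also get $\mu(X-A)\geq \mu(V_N)=\infty$ because $(V_n)$ is an infinite volume end. For the boundary condition: $U_N$ is open in $X$ by the definition of an end, while $U_N$ is closed in $X-K_N$ since connected components are always closed in the ambient space. As $X-K_N$ is open in $X$, closedness of $U_N$ in $X-K_N$ gives $\overline{U_N}\cap(X-K_N)=U_N$, hence $\overline{U_N}\subseteq U_N\cup K_N$; combined with openness of $U_N$ this yields $\partial A=\overline{U_N}\setminus U_N\subseteq K_N$. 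Thus $\partial A$ is a closed subset of the compact set $K_N$ and so is compact; moreover $A$ is open and hence $\mu$-measurable since $\mu$ is Radon.

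I do not expect a genuine obstacle here: the statement is essentially immediate once one observes that two ends first differing at level $N$ yield two disjoint components of $X-K_N$, each of which, in the infinite volume case, has infinite measure. The only points deserving a line of care are that the notion of infinite volume end is independent of the exhaustion (so that we are free to use $(K_n)$), and the elementary topological fact that a connected component of the open set $X-K_N$ has boundary contained in $K_N$.
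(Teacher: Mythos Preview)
Your argument is correct and is exactly the natural unwinding of the definitions; the paper itself states this observation without proof, treating it as immediate, so there is nothing to compare against beyond noting that your write-up makes explicit precisely the two points one has to check (disjointness of $U_N$ and $V_N$, and $\partial U_N\subseteq K_N$).
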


\subsection{1-Cohomology} Let $G$ be a topological group along with a continuous action $G\acts A$ on an abelian group. Then $Z^1(G, A)$ will denote the abelian group of 1-cocycles, i.e., continuous maps $c: G\to A$ satisfying the cocycle identity $c(gh)=c(g)+g. c(h)$. Let $B^1(G,A)$ denote the group of 1-coboundaries, i.e. maps $b: G\to A$ given by $b(g)=\xi-g .\xi$ for some $\xi\in A$.
Let $H^1(G,A)$ be the first cohomology group, which is defined as the quotient $H^1(G, A)\coloneqq Z^1(G,A)/B^1(G,A)$. If $\pi$ is a strongly continuous representation of $G$ into unitary operators $\cU(\cH)$ on a Hilbert space $\cH$, then we write $H^1(G,\pi)$ instead of $H^1(G,\cH)$ with the action $G\acts \cH$ being implemented through the representation $\pi$. 

Given a measure class preserving action $G\acts (X,\mu)$, recall that the associated \textbf{Koopman representation} $\kappa:G\to \mathcal{U}(L^2(X,\mu))$ is defined as $(\kappa(g)f)(x)=f(g^{-1}.x)(\tfrac{dg_*\mu}{d\mu})^{1/2}(x)$.

\begin{prop}\label{prop:almStabMapImpNonvanKoopCoh}
    Let $G$ be a topological group and let $G\acts(X,\mu)$ be a continuous measure class preserving transitive action on a $\sigma$-finite standard measure space $(X,\mu)$. Let $\kappa$ be the associated Koopman representation. If there exists a measurable set $B\subseteq X$ such that $\mu(B)$ and $\mu(X-B)$ are both infinite and $\mu(B\triangle gB)<\infty$ for every $g\in G$, then $H^1(G,\kappa)\not=\{0\}$.
\end{prop}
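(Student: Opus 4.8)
The plan is to exhibit a nonzero class in $H^1(G,\kappa)$ directly from $B$, treating $\mathbf{1}_B$ as a ``formal'' vector (it is not in $L^2(X,\mu)$ since $\mu(B)=\infty$). Set $c(g):=\kappa(g)\mathbf{1}_B-\mathbf{1}_B$. When the action preserves $\mu$ — the case relevant to the property (T) applications, as property (T) groups are unimodular — one has $\kappa(g)\mathbf{1}_B=\mathbf{1}_{gB}$, so $c(g)=\mathbf{1}_{gB}-\mathbf{1}_B$, which lies in $L^2(X,\mu)$ with $\|c(g)\|_2^2=\mu(gB\triangle B)<\infty$ by hypothesis; in the general measure-class-preserving case one instead works with $\mathbf{1}_B\cdot w$ for a suitable positive weight $w$ (for instance $w=(d\nu_0/d\mu)^{1/2}$ when an equivalent $G$-invariant $\sigma$-finite measure $\nu_0$ exists, so that $c(g)=(\mathbf{1}_{gB}-\mathbf{1}_B)w$ and $\|c(g)\|_2^2=\nu_0(gB\triangle B)<\infty$). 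Granting $c(g)\in L^2(X,\mu)$, the cocycle identity is purely formal: $c(gh)=\kappa(gh)\mathbf{1}_B-\mathbf{1}_B=(\kappa(g)\mathbf{1}_B-\mathbf{1}_B)+\kappa(g)(\kappa(h)\mathbf{1}_B-\mathbf{1}_B)=c(g)+\kappa(g)c(h)$. For continuity, since $\kappa$ is unitary we have $\|c(g)-c(g_0)\|_2=\|c(g_0^{-1}g)\|_2$, so it suffices that $\|c(g)\|_2\to 0$ as $g\to e$; the function $g\mapsto\|c(g)\|_2^2=\mu(gB\triangle B)$ is measurable (Fubini, using joint measurability of the action) and conditionally negative definite, being the squared norm of a $1$-cocycle of the unitary representation $\kappa$, hence continuous. (Alternatively, approximate $\mathbf{1}_B$ from below by the honest vectors $\mathbf{1}_{B_n}$, $B_n\nearrow B$ with $\mu(B_n)<\infty$, and invoke strong continuity of $\kappa$.)

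The heart of the matter is that $c$ is not a coboundary. Suppose $c(g)=\kappa(g)\eta-\eta$ for some $\eta\in L^2(X,\mu)$. Then $\mathbf{1}_B-\eta$ is $\kappa$-invariant; in the measure-preserving case this says $\mathbf{1}_B-\eta$ is a $G$-invariant function, hence, by ergodicity of the transitive action, a.e.\ equal to a constant $\lambda$, so $\eta=\mathbf{1}_B-\lambda$. But then $\eta\in L^2(X,\mu)$ would force $\int_B|1-\lambda|^2\,d\mu<\infty$ and $\int_{X-B}|\lambda|^2\,d\mu<\infty$, which, since $\mu(B)=\mu(X-B)=\infty$, give $\lambda=1$ and $\lambda=0$ at once — a contradiction. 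In the general case the same scheme applies: a $\kappa$-invariant $\mathbf{1}_B w-\eta$ makes $|\mathbf{1}_B w-\eta|^2\,d\mu$ a $G$-invariant $\sigma$-finite measure absolutely continuous with respect to $\mu$, so by ergodicity it is $0$ or a scalar multiple of the (essentially unique up to scaling) $G$-invariant $\sigma$-finite measure in the class of $\mu$; comparing $\mathbf{1}_B w-\eta$ on $B$ and on $X-B$ against the $L^2$ vector $\eta$, and using that the hypotheses force both $B$ and $X-B$ to have infinite measure for that invariant measure, forces the scalar to vanish, and we are back to $\eta\notin L^2(X,\mu)$ — absurd. Hence $[c]\neq 0$, so $H^1(G,\kappa)\neq\{0\}$.

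The step I expect to be the genuine obstacle is precisely the $L^2$-membership of $c(g)$ once $\mu$ is not $G$-invariant: $\kappa(g)\mathbf{1}_B-\mathbf{1}_B$ itself need not be square-integrable, because the Radon--Nikodym factor $(dg_*\mu/d\mu)^{1/2}$ sits on the \emph{infinite}-measure set $B\cap gB$ rather than on the finite-measure set $B\triangle gB$. One must therefore identify the correct weighted formal vector (and a substitute for the invariant half-density when no equivalent invariant measure is available), while simultaneously verifying that the two defining features of $B$ — almost-invariance and infinite measure of both pieces — survive the passage to whichever measure one ends up using; the latter is exactly what makes the resulting cocycle nontrivial in the non-coboundary argument above.
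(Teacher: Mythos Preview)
Your approach is exactly the paper's: define $c(g)=\mathbf{1}_B-\kappa(g)\mathbf{1}_B$ (the paper uses this sign, opposite to yours, which is immaterial), note it lands in $L^2$, and rule out the coboundary case by observing that $\mathbf{1}_B-\xi$ would be $G$-invariant and hence, by transitivity, a constant $a$, forcing $\xi=\mathbf{1}_B-a\notin L^2$ since both $\mu(B)$ and $\mu(X\setminus B)$ are infinite. The paper's proof is in fact terser than yours --- it neither discusses continuity of $c$ nor addresses the Radon--Nikodym issue you flag at the end, simply writing ``by assumption, $c(g)\in L^2(X,\mu)$''; so the obstacle you isolate in the genuinely non-measure-preserving case is real and is glossed over in the original as well (the applications in the paper that matter are to invariant $\mu_{G/\Gamma}$, where your first paragraph already suffices).
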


\begin{proof}
    Define a 1-cocycle $c: G\to L^2(X,\mu)$ by $c(g)=\mathbf{1}_B-g.\mathbf{1}_B$. By assumption, $c(g)\in L^2(X,\mu)$ for every $g\in G$ and $\mathbf{1}_B+a\not\in L^2(X,\mu)$ for any $a\in \mathbf{C}$. 
    
    Assume $c$ were a 1-coboundary, i.e., there exists $\xi\in L^2(X,\mu)$ such that $c(g)=\xi-g.\xi$. Then $\mathbf{1}_B-\xi=g.(\mathbf{1}_B-\xi)$ and, by transitivity, $\mathbf{1}_B+a=\xi\in L^2(X,\mu)$ for some $a\in \mathbf{C}$ which is a contradiction.
\end{proof}

When $G$ has property (T), it is a classical result that for every unitary representation $\pi: G\to\cU(\cH)$, the first cohomology group $H^1(G,\pi)=\{0\}$ vanishes \cite{Delorme1977}, and that this property characterizes property (T) \cite{Guichardet}. 
While we can't comment on every representation, if $G$ is generated by a sequence of chain-commuting groups, there is a sufficient condition for the first cohomology group with coefficients in a unitary representation to vanish. The following proof is essentially described in Peterson \cite{PetersonL2Rigid}, although it is not stated in this generality.

\begin{prop}\label{prop:commutingRigid}
    Let $G$ be a topological group and fix a strongly continuous unitary representation $\pi:G\to \cU(\cH)$. 

    Assume $G=\langle G_n\rangle _{n\in\mathbf{N}}$ is generated by subgroups $G_n$ such that, for every $n\in \mathbf{N}$,
    \begin{enumerate}
        \item $G_{n}$ commutes with $G_{n+1}$ and
        \item $\pi|_{G_{n}}$ does not admit any nonzero invariant vectors and
        \item there exists $i$ such that $\pi|_{G_i}$ does not weakly contain the trivial representation.
    \end{enumerate} 
    Then $H^1(G, \pi)=\{0\}$.
\end{prop}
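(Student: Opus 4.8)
The goal is to show that every $c\in Z^1(G,\pi)$ is a coboundary. The proof has one genuinely substantial step — controlling the restriction $c|_{G_i}$, where $i$ is the index supplied by hypothesis (3) — after which everything follows by an elementary propagation along the chain of commuting subgroups together with a continuity argument.

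For the substantial step I would work with the reduced cohomology of $G_i$ in the usual way. Fix a suitable adapted symmetric probability measure $\mu_i$ on the $\sigma$-compact group $G_i$ and equip $Z^1(G_i,\pi|_{G_i})$ with the energy inner product $\langle c',c''\rangle=\int_{G_i}\langle c'(s),c''(s)\rangle\,d\mu_i(s)$, making it a Hilbert space; then $Z^1(G_i,\pi|_{G_i})=\overline{B^1(G_i,\pi|_{G_i})}\oplus\mathcal{W}$ orthogonally, where $\mathcal{W}=\{c':\int_{G_i}c'(s)\,d\mu_i(s)=0\}$ is the space of $\mu_i$-harmonic cocycles, and each reduced class in $\overline{H}^1(G_i,\pi|_{G_i}):=Z^1/\overline{B^1}$ has a unique harmonic representative, namely its $\mathcal{W}$-component. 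The point is now that $G_{i+1}$, which commutes with $G_i$ by (1), acts on $Z^1(G_i,\pi|_{G_i})$ by $(g\cdot c')(s)=\pi(g)c'(s)$, and this action is unitary for the energy norm and preserves both $B^1$ (hence $\overline{B^1}$) and $\mathcal{W}$. Moreover, the cocycle identity in $G$ gives, for $g\in G_{i+1}$ and $s\in G_i$, the equality $(g\cdot c|_{G_i})(s)-c|_{G_i}(s)=\pi(g)c(s)-c(s)=\pi(s)c(g)-c(g)$, which is the $G_i$-coboundary of $-c(g)$; hence $[c|_{G_i}]$ is a $G_{i+1}$-fixed vector of $\overline{H}^1(G_i,\pi|_{G_i})$. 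By uniqueness of harmonic representatives the harmonic part $c_i$ of $c|_{G_i}$ is itself $G_{i+1}$-fixed, i.e.\ $\pi(g)c_i(s)=c_i(s)$ for all $s\in G_i$, $g\in G_{i+1}$, so $c_i$ takes values in the $G_{i+1}$-invariant vectors, which are $\{0\}$ by (2) applied to $G_{i+1}$. Thus $c_i\equiv0$ and $c|_{G_i}\in\overline{B^1(G_i,\pi|_{G_i})}$. Finally, hypothesis (3) for the index $i$ implies that $B^1(G_i,\pi|_{G_i})$ is closed: if $s\mapsto\xi_n-\pi(s)\xi_n$ converges, then $(\xi_n-\xi_m)$ is asymptotically $G_i$-invariant, hence Cauchy since $\pi|_{G_i}$ has no almost invariant vectors. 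So in fact $c|_{G_i}\in B^1(G_i,\pi|_{G_i})$: there is $\eta\in\cH$ with $c(s)=\eta-\pi(s)\eta$ for every $s\in G_i$.

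Subtracting the genuine $G$-coboundary $g\mapsto\eta-\pi(g)\eta$ from $c$, I may assume $c|_{G_i}=0$. Now propagate along the chain: if $c|_{G_n}=0$, then for each $m\in\{n-1,n+1\}$, every $h\in G_m$ and every $g\in G_n$ commute, so $c(gh)=c(hg)$ yields $(1-\pi(g))c(h)=(1-\pi(h))c(g)=0$; thus $c(h)$ is a $G_n$-invariant vector and hence $0$ by (2), so $c|_{G_m}=0$. By induction $c|_{G_n}=0$ for every $n$, so $c$ vanishes on the abstract subgroup generated by the $G_n$, which is dense in $G$; since $c$ is continuous, $c\equiv0$. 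Therefore the original cocycle is a coboundary and $H^1(G,\pi)=\{0\}$.

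The main obstacle is the first step: establishing that $[c|_{G_i}]$ lies in the closed span of the coboundaries of $G_i$. The mechanism above is clean once the harmonic-cocycle decomposition of $Z^1(G_i,\pi|_{G_i})$ and the uniqueness of harmonic representatives are in hand; setting these up carefully for a group $G_i$ that need not be compactly generated requires some attention, but after that the roles of the commuting neighbor $G_{i+1}$ (to annihilate the harmonic part) and of the spectral gap of $\pi|_{G_i}$ (to turn ``limit of coboundaries'' into ``coboundary'') are both forced. The subtraction and inductive propagation in the last paragraph are entirely routine.
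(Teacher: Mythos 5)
Your overall strategy is sound and the propagation/continuity steps at the end are exactly right, but your route through the key step is genuinely different from the paper's, and it is both heavier and not available at the stated level of generality. The paper's argument is far more elementary: using condition (3), pick a compact $Q\subseteq G_i$ and $C>0$ with $\|\xi\|\leq C\sup_{g\in Q}\|\pi(g)\xi-\xi\|$ for all $\xi$, and apply this to $\xi=c(h)$ for $h\in G_{i+1}$. The commutation identity $\pi(g)c(h)-c(h)=\pi(h)c(g)-c(g)$ then gives $\|c(h)\|\leq 2C\sup_{g\in Q}\|c(g)\|$, so $c|_{G_{i+1}}$ is \emph{bounded}; a bounded cocycle is a coboundary (circumcenter of the affine orbit), and after subtracting that coboundary one propagates exactly as you do. Note the structural difference: the paper spends the spectral gap of $\pi|_{G_i}$ to control $c$ on the \emph{neighboring} group $G_{i+1}$, whereas you spend it to close up $B^1(G_i,\pi|_{G_i})$ and use the invariant-vector hypothesis on $G_{i+1}$ to kill the harmonic part of $c|_{G_i}$. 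Your mechanism is essentially Shalom's harmonic-cocycle argument for products, which the paper explicitly cites in a remark as an alternative in the product case; the paper's own proof (following Peterson) buys a two-line argument that needs nothing beyond the definition of spectral gap.

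The genuine gap is the one you flag but do not close: the proposition is stated for an arbitrary topological group $G$ and arbitrary subgroups $G_n$, and your first step presupposes that $G_i$ is $\sigma$-compact (really, in practice, compactly generated and locally compact) so that an adapted symmetric probability measure $\mu_i$ exists for which the energy form is finite on \emph{all} of $Z^1(G_i,\pi|_{G_i})$ and makes it a complete Hilbert space with $Z^1=\overline{B^1}\oplus\mathcal{W}$. If $G_i$ is not compactly generated, no single $\mu_i$ makes every cocycle have finite energy (cocycles can grow arbitrarily fast off any compact set), and if $G_i$ is not even locally compact the whole framework of adapted measures and harmonic representatives is unavailable; completeness of $Z^1$ in the energy norm and the passage from $\mu_i$-a.e.\ identities back to everywhere identities also require separate arguments. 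There is also a smaller elision: you use that hypothesis (3) in its compact-set form is equivalent to a uniform lower bound on the $\mu_i$-averaged energy $\int\|\eta-\pi(s)\eta\|^2\,d\mu_i(s)\geq\varepsilon\|\eta\|^2$, which is true for adapted measures on suitable groups but is itself a lemma. None of this machinery is needed: replacing your first step by the paper's boundedness argument removes every one of these obstructions and proves the statement in the generality claimed.
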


\begin{proof}
Suppose $c:G\to \cH$ is a 1-cocycle. 
First, note that if $g,h\in G$ commute, then $\pi(g)c(h)-c(h)=c(gh)-c(g)-c(h)=\pi(h)c(g)-c(g)$.

Choose $i$ so that $\pi|_{G_i}$ also does not weakly contain the trivial representation. Then there exists a compact subset $Q\subseteq G_i$ and $C>0$ such that for every $\xi\in \cH$ we have 
\[
\|\xi\|_2\leq C\sup_{g\in Q}\|\pi(g)\xi-\xi\|_2.
\]
Hence, for every $h\in G_{i+1}$,
\[
    \|c(h)\|_2\leq C \sup_{g\in Q} \|\pi(g)c(h)-c(h)\|_2
    =C \sup_{g\in Q} \|\pi(h)c(g)-c(g)\|_2\leq 2C\sup_{g\in Q}\|c(g)\|_2
\]
    showing that $c|_{G_{i+1}}$ is bounded. By subtracting a 1-coboundary, we may thus assume $c|_{G_{i+1}}=0$. 
    \begin{claim}
        If $H$ and $K$ are commuting subgroups of $G$ and $c|_H=0$ and $\pi|_H$ does not admit any nontrivial invariant vectors, then $c|_K=0$.
    \end{claim} 
    \begin{proof}[Proof of Claim]  
        Note that $c(k)$ is $H$-invariant for every $k\in K$. Indeed, $\pi(h)c(k)=\pi(k)c(h)-c(h)+c(k)=c(k)$ for every $h\in H$. Therefore $c(k)=0$ for every $k\in K$.
    \end{proof}
    It follows from the claim that $c|_{G_n}=0$ for every $n\in\mathbf{N}$ and so $c=0$.
\end{proof}

\begin{remark}
     In the case where the group $G=G_0\times G_1$ is actually a product group, a stronger version of Proposition \ref{prop:commutingRigid} follows immediately from Shalom \cite[Corollary 1.8 and Proposition 3.2]{Shalom2000}. In that case, condition c) is superfluous since a representation $\pi$ of a product $G\times H$ weakly contains the trivial representation if and only if $\pi|_G$ and $\pi|_H$ both weakly contain the trivial representation. In particular, if the groups involved are products then c) is unnecessary in Proposition \ref{prop:commutingRigid}, and hence in Proposition \ref{prop:vanishingCoh}, Theorem \ref{thm:pc}, Theorem \ref{thm:graph}, and Corollary \ref{cor:discrete}.
\end{remark}

\subsection{Homogeneous spaces}
Given a discrete subgroup $\Gamma\leq G$ of a lcsc group $G$, the quotient space $G/\Gamma$ is called a \textbf{homogeneous space} and it comes with an action $G\acts G/\Gamma$ by left multiplication. 
The space $G/\Gamma$ admits a $\sigma$-finite Radon measure $\mu_{G/\Gamma}$ with the property that the action of $G$ preserves the measure class.  
If $G$ is unimodular, the measure $\mu_{G/\Gamma}$ will be taken to be $G$-invariant and in fact satisfy $\mu_G=\int_{G/\Gamma} g.\mu_\Gamma d\mu_{G/\Gamma}(g\Gamma)$ where $\mu_G$ is a fixed left-invariant Haar measure on $G$ and $\mu_\Gamma$ is the counting measure. See \cite[Appendix B]{KazhdanBook} for more about measures on homogeneous spaces.

Every lcsc group $G$ admits a left invariant proper metric $d_G$ that generates the topology \cite{HaagPrzy2006}. We then define the \textbf{injectivity radius at a point $x\in G/\Gamma$} as the supremum over $r\in \mathbf{R}$ such that the map $g\in G\mapsto gx$ is an injection when restricted to the ball of radius $r$. 
It is easy to see the \textbf{injectivity radius of $G/\Gamma$ is bounded away from 0} if and only if there exists a nonempty neighborhood of the identity $e\in U\subseteq G$ with compact closure such that for every $x\in G/\Gamma$ the map $g\in G\mapsto gx$ restricted to $U$ is injective. Hence, the property of having injectivity radius bounded away from 0 is independent of the choice of metric and we will make use of it without fixing a metric. 

Say an inclusion $\Gamma\leq G$ is \textbf{coamenable} if the Koopman representation $\kappa: G\to \cU(L^2(G/\Gamma,\mu_{G/\Gamma}))$  weakly contains the trivial representation. This is equivalent to the action $G\acts G/\Gamma$ being amenable in the sense of Greenleaf \cite{Greenleaf1969} and Eymard \cite{Eymard1972}.

\begin{prop}\label{prop:vanishingCoh}
    Let $\Gamma\leq G$ be a discrete subgroup of  a lcsc group $G$. Denote by $\kappa$ the Koopman representation of $G\acts(G/\Gamma,\mu_{G/\Gamma})$. Assume that one of the following holds: 
    \begin{enumerate}
        \item $G$ has property (T). 
        \item $G=\langle G_n\rangle_{n\in \mathbf{N}}$ is generated by a sequence of subgroups such that for every $n\in \mathbf{N}$
        \begin{enumerate}
            \item $G_n$ commutes with $G_{n+1}$ and
            \item $L^2(G/\Gamma,\mu_{G/\Gamma})$ does not admit any nonzero $G_n$-invariant vectors and
            \item there exists $i$ such that $L^2(G/\Gamma,\mu_{G/\Gamma})$ does not admit $G_i$-almost invariant vectors.  
        \end{enumerate}
    \end{enumerate}
    Then the first cohomology group $H^1(G,\kappa)=\{0\}$ vanishes.
\end{prop}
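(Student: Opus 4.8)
The plan is to deduce the proposition directly from the two cohomology-vanishing results already established, namely the Delorme--Guichardet characterization of property (T) recalled just above and Proposition~\ref{prop:commutingRigid}. In both cases the substance is already contained in those statements; what remains is to verify that the Koopman representation $\kappa$ is an admissible input and that the hypotheses on $G$ and $\Gamma$ translate into the hypotheses those results require.

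First I would check that $\kappa$ is a strongly continuous unitary representation of $G$. Since $\Gamma\leq G$ is discrete and $G$ is lcsc, $(G/\Gamma,\mu_{G/\Gamma})$ is a standard $\sigma$-finite measure space on which $G$ acts continuously and nonsingularly, with Radon--Nikodym derivatives depending measurably on the group element; it is then a standard fact that the resulting Koopman representation $\kappa:G\to\mathcal{U}(L^2(G/\Gamma,\mu_{G/\Gamma}))$ is unitary and strongly continuous. When $G$ is unimodular we take $\mu_{G/\Gamma}$ to be $G$-invariant, so the Radon--Nikodym cocycle is trivial and $\kappa$ is the usual quasi-regular-type representation; in general the cocycle is present but causes no difficulty below.

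For case (1), property (T) is equivalent to $H^1(G,\pi)=\{0\}$ for every strongly continuous unitary representation $\pi$ of $G$, by the result of Delorme and Guichardet quoted above; taking $\pi=\kappa$ gives the conclusion at once. (One may also note that property (T) forces $G$ to be unimodular, so $\mu_{G/\Gamma}$ is genuinely $G$-invariant, although this plays no role.) For case (2), I would apply Proposition~\ref{prop:commutingRigid} with $\pi=\kappa$ and the given subgroups $G_n$: hypothesis (1) there is precisely (2a); hypothesis (2), that $\kappa|_{G_n}$ has no nonzero invariant vectors, is precisely (2b) read for the Koopman representation; and hypothesis (3), that $\kappa|_{G_i}$ does not weakly contain the trivial representation for some $i$, is precisely (2c), since for lcsc groups weak containment of the trivial representation is the same as the existence of almost invariant vectors. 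Proposition~\ref{prop:commutingRigid} then gives $H^1(G,\kappa)=\{0\}$.

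There is no serious obstacle here; the one point deserving attention is the bookkeeping around non-unimodularity, where conditions (2b) and (2c) must be interpreted for the Koopman representation with its Radon--Nikodym twist rather than for the naive action of $G$ on functions, and where one should confirm the standard dictionary between ``almost invariant vectors'' and ``weakly contains the trivial representation.'' Once these identifications are fixed, both cases are immediate consequences of results already in hand.
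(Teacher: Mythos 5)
Your proof is correct and follows the same route as the paper: case (1) is the Delorme--Guichardet vanishing of $H^1$ for property (T) groups applied to $\kappa$, and case (2) is a direct application of Proposition~\ref{prop:commutingRigid} to $\pi=\kappa$. The extra checks you record (strong continuity of the Koopman representation, the equivalence of almost invariant vectors with weak containment of the trivial representation) are standard and the paper leaves them implicit.
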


\begin{proof}
In case (1), property (T) implies Serre's property (FH) by \cite{Delorme1977} and thus $H^1(G,\kappa)=\{0\}$.

In case (2), we may apply Proposition \ref{prop:commutingRigid} to finish the proof.
\end{proof}

\section{Path connected groups}

For the duration of this section, let us restrict ourselves to the setting of homogeneous spaces of path connected groups. The example to keep in mind is that of homogeneous spaces of connected Lie groups as they are automatically path connected and due to a theorem of Yamabe \cite{Yamabe1953}, any connected locally compact group arises as a projective limit of Lie groups.

A version of the following proposition initially appeared in a paper of Freudenthal \cite{Freudenthal1931} where he proves it for the actions of a path connected group $G$ on itself by left and right translation. He makes use of it to prove that path connected groups can have at most two ends. 

\begin{lemma} \label{lem:cmpctBoundaryImpAlmStab}
    Let $G$ be a path connected topological group and let $G\acts X$ be an action by homeomorphisms on a topological space $X$. Let $A\subseteq X$ be a subset with compact boundary $\partial A$. Then $A\triangle gA$ is relatively compact for every $g\in G$.
\end{lemma}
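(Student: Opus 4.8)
The plan is to trap $A\triangle gA$ inside a single compact set obtained by sweeping the boundary $\partial A$ along a path. Fix $g\in G$ and, using path connectedness, choose a continuous path $\gamma\colon[0,1]\to G$ with $\gamma(0)=e$ and $\gamma(1)=g$. The claim to prove is
\[
A\triangle gA\ \subseteq\ C:=\bigcup_{t\in[0,1]}\gamma(t)\cdot\partial A .
\]

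To establish this inclusion, fix $x\in A\triangle gA$ and consider the path $p_x\colon[0,1]\to X$ given by $p_x(t)=\gamma(t)^{-1}\cdot x$; it is continuous because inversion in $G$ is continuous and the action is continuous, and it joins $p_x(0)=x$ to $p_x(1)=g^{-1}x$. Exactly one of these two endpoints lies in $A$ and the other in $X\setminus A$: if $x\in A\setminus gA$ then $x\in A$ and $g^{-1}x\notin A$, and if $x\in gA\setminus A$ the roles are swapped. Since $X\setminus\partial A=\operatorname{int}(A)\sqcup(X\setminus\overline A)$ is a disjoint union of two open sets, a connected set such as $p_x([0,1])$ that avoided $\partial A$ would be contained in one of the two pieces, putting both endpoints of $p_x$ in $A$ or both in $X\setminus A$ --- a contradiction. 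Hence there is $t_0\in[0,1]$ with $\gamma(t_0)^{-1}x\in\partial A$, i.e.\ $x\in\gamma(t_0)\,\partial A\subseteq C$.

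Finally, $C$ is compact: it is the image of the compact space $[0,1]\times\partial A$ (compact since $\partial A$ is compact by hypothesis) under the continuous map $(t,a)\mapsto\gamma(t)\cdot a$, continuity coming from continuity of $\gamma$ together with joint continuity of the action $G\times X\to X$. Thus $A\triangle gA$ is contained in the compact set $C$, hence relatively compact.

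The substance of the argument is the choice of the right compact set $C$ and the ``intermediate value'' step showing that a path with endpoints on opposite sides of $A$ must meet $\partial A$; this is the only place path connectedness of $G$ enters, through the existence of $\gamma$ and hence of the path $p_x$ from $x$ to $g^{-1}x$. Everything else --- continuity of $p_x$, the partition $X\setminus\partial A=\operatorname{int}(A)\sqcup(X\setminus\overline A)$, and compactness of a continuous image of $[0,1]\times\partial A$ --- is routine point-set topology. The one point needing a little care is the bookkeeping of which endpoint of $p_x$ lands in $A$ versus $X\setminus A$ in the two cases $x\in A\setminus gA$ and $x\in gA\setminus A$.
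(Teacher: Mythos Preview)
Your proof is correct and follows essentially the same approach as the paper's: both sweep $\partial A$ along a path in $G$ to produce a compact set, and both use a connectedness/intermediate-value argument on the path $t\mapsto(\text{path}(t))^{\pm 1}\cdot x$ to see it must hit $\partial A$. The only differences are cosmetic (you take a path from $e$ to $g$ and use $\gamma(t)^{-1}x$, the paper takes a path from $e$ to $g^{-1}$ and uses $\phi(r)x$; you spell out the disconnection $X\setminus\partial A=\operatorname{int}(A)\sqcup(X\setminus\overline A)$ where the paper states the conclusion more tersely).
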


\begin{proof}
    Fix $g\in G$. Since $G$ is path connected, there exists a path $\phi: [0,1]\to G$ such that $\phi(0)=e$ and $\phi(1)=g^{-1}$. Define the map $\Psi:[0,1]\times X\to X$ by $(r,x)\mapsto \phi(r)^{-1} x$ and note that $\Psi$ is continuous, so $\Psi([0,1]\times \partial A)$ is compact.

    Take $x\in A- gA$ and consider the path $\phi_{x}:[0,1]\to X$ defined by $r\mapsto \phi(r) x$. Now $\phi_{x}^{-1}(A)$ and $\phi_{x}^{-1}(X-A)$ are both nonempty so $\phi_{x}^{-1}(\partial A)$ is nonempty as well. We conclude $x\in\Psi([0,1]\times\partial A)$ and so $A-gA$ is relatively compact. 

    An identical argument shows $gA-A$ is also relatively compact.
\end{proof}

We now demonstrate that under an easy to satisfy assumption about injectivity radius, every end is an infinite volume end.

\begin{lemma}\label{lem:injRadiusEndsAreVolumetric}
    Let $\Gamma\leq G$ be a discrete subgroup of a unimodular lcsc group $G$. Assume that the injectivity radius of $G/\Gamma$ is bounded away from 0. 

    Then the space of ends $\mathcal{E}$ of $G/\Gamma$ is equal to the space of volumetric ends $\mathcal{E}_{\mu}$. 
\end{lemma}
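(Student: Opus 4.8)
The plan is to show that the positive lower bound on the injectivity radius forces every connected component of $G/\Gamma$ minus a compact set to have infinite volume, so that no end can have finite volume and hence $\mathcal{E}=\mathcal{E}_\mu$. First I would fix a left invariant proper metric $d_G$ on $G$ (which exists by \cite{HaagPrzy2006}) and let $r_0>0$ be a uniform lower bound on the injectivity radius; concretely, I would use the reformulation given in the preliminaries: there is a relatively compact open neighborhood $U$ of the identity such that for every $x\in G/\Gamma$ the orbit map $g\mapsto gx$ is injective on $U$. Without loss of generality I can take $U$ symmetric and shrink it so that $U^2$ is still relatively compact, and then push $U$ down to a ``uniform ball'' $B(x)\coloneqq U x$ around every point $x$ of $G/\Gamma$; injectivity of the orbit map on $U$ gives that $B(x)$ has $\mu_{G/\Gamma}$-measure exactly $\mu_G(U)=:v_0>0$, a constant independent of $x$ by left invariance of the Haar measure and $G$-invariance of $\mu_{G/\Gamma}$ (here unimodularity is used).

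Next I would observe that such a uniformly thick space cannot have a ``thin'' infinite end. The key geometric point: if $C$ is a connected component of $G/\Gamma - K$ for some compact $K$, and $C$ is noncompact, then $C$ contains points arbitrarily far (in $d_G$-distance pushed to $G/\Gamma$, or just: leaving every compact set) from $K$; I would extract a sequence $x_1,x_2,\dots\in C$ that escapes to infinity and is $U^2$-separated, i.e. $x_j\notin U^2 x_k$ for $j\neq k$ — this is possible since $C$ is noncompact and a maximal $U^2$-separated subset of a noncompact subset of an lcsc space is infinite. Then the balls $B(x_j)=U x_j$ are pairwise disjoint (if $U x_j\cap U x_k\neq\emptyset$ then $x_j\in U^{-1}U x_k=U^2 x_k$), each of measure $v_0$; moreover, after enlarging $K$ slightly to $K'\coloneqq \overline{U^2}\cdot K$ (still compact, using relative compactness of $U^2$) and passing to the tail of the sequence lying outside $K'$, each $B(x_j)$ is contained in $C$ — because $B(x_j)$ is connected, meets $C$, and is disjoint from $K$. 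Hence $\mu_{G/\Gamma}(C)\geq \sum_j v_0=\infty$. Therefore every noncompact component of $G/\Gamma-K$ has infinite volume, and the only components with finite volume are compact ones, which are never part of an end. Consequently every end of $G/\Gamma$ is an infinite volume end, giving $\mathcal{E}=\mathcal{E}_\mu$.

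The main obstacle I anticipate is the bookkeeping around connectivity: ensuring that the uniform ball $B(x_j)$ around an escaping point genuinely lies inside the single component $C$ rather than straddling $K$ or spilling into another component. This is handled by the buffering trick — replacing $K$ by $K'=\overline{U^2}K$ and only using points $x_j$ outside $K'$, so that $B(x_j)=Ux_j$ stays within $d_G$-distance $\mathrm{diam}(U)$ of $x_j$ and hence misses $K$ entirely; being connected and meeting the component $C$ of $(G/\Gamma)-K$ that contains $x_j$, it is swallowed by $C$. One should also note that enlarging $K$ to $K'$ only coarsens the compact exhaustion, which does not change the space of ends up to homeomorphism, so it is harmless. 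A minor secondary point is checking that a noncompact subset of an lcsc (hence $\sigma$-compact, locally compact) space admits an infinite $U^2$-separated subset escaping to infinity, which follows from local compactness: a $U^2$-net is locally finite, so if it were finite the set would be relatively compact.
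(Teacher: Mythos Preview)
Your proposal is correct and follows essentially the same route as the paper's proof: both pick a symmetric relatively compact neighborhood $U$ of the identity on which the orbit map is injective, argue that $\mu_{G/\Gamma}(Ux)$ has a uniform positive lower bound, extract an infinite $U$-separated (equivalently, $Ux_i$ pairwise disjoint) family of points inside a non-relatively-compact component $A$ away from $UK$, and sum the measures. The only cosmetic difference is that the paper obtains the lower bound $\mu_{G/\Gamma}(Ux)\geq\mu_G(U\cap F)$ via a fundamental domain $F$, whereas you invoke injectivity plus unimodularity to get $\mu_{G/\Gamma}(Ux)=\mu_G(U)$ directly; and the paper uses maximality of the separated family to see it is infinite, while you phrase this as a $U^2$-net argument---these are the same thing.
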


\begin{proof}
    It is enough to show that for every compact set $K$ every non-relatively compact connected component $A$ of $X-K$ has infinite $\mu_{G/\Gamma}$-measure. 

    Since the injectivity radius is bounded away from 0, we may find a $U=U^{-1}\leq G$ which is a symmetric open neighborhood of the identity with compact closure such that for every $x\in G/\Gamma$ the map $g\in G\mapsto gx$ is injective when restricted to $U$. 
    By fixing a fundamental domain $F\subseteq G$ for the action by $\Gamma$ by right translation that intersects $U$ on a set of positive measure, we get that $\mu_G(U\cap F)>0$ is a lower bound on $\mu_{G/\Gamma}(Ux)$ for every $x\in G/\Gamma$. 
    Indeed,
    \[
    \mu_G(U\cap F)=\mu_G((U\cap F)g)=\mu_{G/\Gamma}((Ug\cap Fg)\Gamma)\leq\mu_{G/\Gamma}(Ug\Gamma).
    \]
    
    Fix compact $K\subseteq G/\Gamma$. Let $A$ be a non-relatively compact connected component of $G/\Gamma- K$. We will show $\mu_{G/\Gamma}(A)=\infty$. 
    By continuity of the action of $G$, the closures of $UK$ and $U^2x$ are compact for every $x\in G/\Gamma$. 
    Let $\{x_i\}_{i\in I}$ be a maximal collection of points in $A-UK$ satisfying $Ux_i\cap Ux_j=\emptyset$ for all $i\not=j$. 
    Observe the set $UK\cup\bigcup_{i\in I} U^2x_i$ covers all of $A$ since if there were a point $y\in A-\left (UK\cup\bigcup_{i\in I} U^2x_i\right )$ then $Uy\cap Ux_i=\emptyset$ for all $i\in I$ and $y\in A-UK$. 
    Since $A$ is not relatively compact, the collection $I$ must be infinite. Therefore, $\mu_{G/\Gamma}(A)\geq \mu_{G/\Gamma}(\bigcup_{i\in I} Ux_i)\geq \sum_{i\in I}\mu_G(U\cap F)=\infty$.
\end{proof}

We now prove the main theorem. 

\begin{thm}\label{thm:pc}
    Let $G$ be a path connected lcsc group and let $\Gamma\leq G$ be a discrete subgroup. Denote by $\kappa$ the Koopman representation of $G\acts (G/\Gamma,\mu_{G/\Gamma})$ and assume that the first cohomology group $H^1(G,\kappa)=\{0\}$ vanishes.
    Then the space $G/\Gamma$ has at most one infinite volume end.

    Moreover, if $G$ is unimodular and there is a positive lower bound for the injectivity radius of $G/\Gamma$, then $G/\Gamma$ has at most one end.
\end{thm}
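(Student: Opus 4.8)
The plan is a short proof by contradiction that assembles the preliminary results, the substantive input being Lemma \ref{lem:cmpctBoundaryImpAlmStab}. Suppose $G/\Gamma$ had at least two infinite volume ends. By Observation \ref{obs:ends} there would be a measurable set $A\subseteq G/\Gamma$ with compact boundary $\partial A$ such that $\mu_{G/\Gamma}(A)=\mu_{G/\Gamma}(G/\Gamma-A)=\infty$. Since $G$ is path connected and $\partial A$ is compact, Lemma \ref{lem:cmpctBoundaryImpAlmStab} applied to the left translation action $G\acts G/\Gamma$ shows that $A\triangle gA$ is relatively compact for every $g\in G$; because $\mu_{G/\Gamma}$ is Radon it is finite on sets with compact closure, so $\mu_{G/\Gamma}(A\triangle gA)<\infty$ for all $g\in G$.

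Next I would invoke Proposition \ref{prop:almStabMapImpNonvanKoopCoh}: the action $G\acts(G/\Gamma,\mu_{G/\Gamma})$ is continuous, transitive, and measure-class preserving on a $\sigma$-finite standard measure space, and the set $B:=A$ meets all of that proposition's hypotheses, namely $\mu_{G/\Gamma}(B)$ and $\mu_{G/\Gamma}(G/\Gamma-B)$ are infinite and $\mu_{G/\Gamma}(B\triangle gB)<\infty$ for every $g\in G$. Hence $H^1(G,\kappa)\neq\{0\}$, contradicting the standing hypothesis. This proves that $G/\Gamma$ has at most one infinite volume end.

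For the ``moreover'' clause, assume in addition that $G$ is unimodular and that the injectivity radius of $G/\Gamma$ is bounded away from $0$. Then Lemma \ref{lem:injRadiusEndsAreVolumetric} identifies the space of ends $\mathcal{E}$ with the space of infinite volume ends $\mathcal{E}_\mu$, so the bound ``at most one infinite volume end'' immediately upgrades to ``at most one end''. I do not anticipate a genuine obstacle here: the one nontrivial ingredient, that path connectivity forces subsets with compact boundary to be $\mu_{G/\Gamma}$-almost $G$-invariant, is precisely the Freudenthal-type Lemma \ref{lem:cmpctBoundaryImpAlmStab}, which is already in hand; what is left is only the routine verification that Proposition \ref{prop:almStabMapImpNonvanKoopCoh} applies (transitivity, $\sigma$-finiteness, standardness) and that Radon measures are finite on compacta.
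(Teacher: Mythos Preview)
Your proof is correct and follows essentially the same route as the paper: contrapose, use Observation~\ref{obs:ends} to obtain $A$, apply Lemma~\ref{lem:cmpctBoundaryImpAlmStab} to get $\mu_{G/\Gamma}(A\triangle gA)<\infty$, invoke Proposition~\ref{prop:almStabMapImpNonvanKoopCoh} for the contradiction, and then Lemma~\ref{lem:injRadiusEndsAreVolumetric} for the ``moreover''. If anything, your write-up is slightly more explicit about why the hypotheses of Proposition~\ref{prop:almStabMapImpNonvanKoopCoh} hold and why relative compactness gives finite measure.
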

\begin{proof}
    We prove the contrapositive, so assume $(G/\Gamma,\mu_{G/\Gamma})$ has more than one infinite volume end. Then by Observation \ref{obs:ends} there must be a set $A\subseteq G/\Gamma$ with compact boundary $\partial A$ satisfying $\mu_{G/\Gamma}(A)=\mu_{G/\Gamma}(G/\Gamma-A)=\infty$. Applying Lemma \ref{lem:cmpctBoundaryImpAlmStab}, we get that $A\triangle gA$ is relatively compact for every $g\in G$ and hence $\mu_{G/\Gamma}(A\triangle gA)$ is finite for every $g\in G$. It follows from Proposition \ref{prop:almStabMapImpNonvanKoopCoh} that in this case the first cohomology group $H^1(G,\kappa)$ does not vanish. 

    Lemma \ref{lem:injRadiusEndsAreVolumetric} now implies that if the injectivity radius is bounded away from 0, then the number of topological ends is the same as the number of infinite volume ends and thus bounded above by one.
\end{proof}

Theorem \ref{thm:1volumetricEndPC} now follows directly from Proposition \ref{prop:vanishingCoh} and Theorem \ref{thm:pc}.
We present one simple application to higher rank Lie groups. 

\begin{example}
    Let $G_1$ and $G_2$ be connected Lie groups with $G_1$ having property (T). Assume $\Gamma_1\leq G_1$ and $\Gamma_2\leq G_2$ are both Anosov subgroups with infinite covolume, then $(G_1\times G_2)/(\Gamma_1\times \Gamma_2)$ has one topological end. 

    Indeed, the fact that $\Gamma_1$ and $\Gamma_2$ are Anosov ensures a positive lower bound on injectivity radius. Then, by property (T), any infinite covolume subgroup $\Gamma_1\leq G_1$ is not coamenable. It follows that the action $G_1\acts (G_1\times G_2)/(\Gamma_1\times \Gamma_2)=G_1/\Gamma_1\times G_2/\Gamma_2$ is not amenable in the sense of Greenleaf and thus $\Gamma_1\times\Gamma_2\leq G_1\times G_2$ is not coamenable allowing us to apply the product case of Theorem \ref{thm:1volumetricEndPC}. 
\end{example}

\section{Groups acting on graphs}
We now move to the orthogonal setting of an lcsc group $G$ acting by automorphisms on a locally finite connected graph $X$ with compact vertex stabilizers. Examples of such actions include compactly generated totally disconnected locally compact groups acting on their Cayley-Abels graphs and non-Archimedean Lie groups acting on the 1-skeleton of their Bruhat-Tits building. Given a discrete subgroup $\Gamma\leq G$, we investigate the number of ends and volumetric ends of the quotient graph $X/\Gamma$. The results and proofs mirror those regarding homogeneous spaces of path connected groups. 

\textbf{Ends of a graph.} Given a connected graph $X$, we can think of $X$ as a path connected metric space by replacing each edge with an isometric copy of the interval $[0,1]$. The space of ends of a connected graph $X$ is then defined to be the space of ends of $X$ as a path connected topological space. 

Graph theoretic ends have traditionally been studied as equivalence classes of rays to infinity. In general, these graph theoretic and topological notions differ, however, when the graph is locally finite there is a one to one correspondence between the space of graph theoretic ends and the space of topological ends as defined above. 

\begin{remark}
    In the context where $X$ is a tree and $G$ is $\Aut(X)$, the graph $X/\Gamma$ has been studied extensively in the book of Bass and Lubotzky \cite{BassLubotzky2001}. Let $v_0\in V(X)$ be a distinguished vertex. The map $\phi:G\to X$ given by $g\mapsto g. v_0$ descends to a map $\varphi: G/\Gamma\to X/\Gamma$ on the quotient spaces. Bass and Lubotzky have examples where the pushforward of the Haar measure $\varphi_*\mu_{G/\Gamma}$ is finite but yet the graph $X/\Gamma$ has infinitely many ends.
\end{remark}

\begin{lemma}\label{lem:cutImpliesCohGraph}
    Let $G$ be an lcsc group and let $\Gamma\leq G$ be a discrete subgroup. Let $X$ be a connected locally finite graph. 
    Fix an action $G\acts X$ by graph automorphisms with compact vertex stabilizers that acts transitively on the vertices.

    Assume that there exists a set $A\subseteq V(X)/\Gamma$ with finite edge boundary. 
    Then $B\coloneqq\varphi^{-1}(A)\subseteq G/\Gamma$ is such
    that $B\triangle gB$ is relatively compact for every $g\in G$.

    Moreover, if $G$ is unimodular and $\Gamma$ admits a finite index subgroup $\Gamma'\leq \Gamma$ such that the restricted action $\Gamma'\acts V(X)$ is free and $A$ is an infinite coinfinite set, then $B$ satisfies $\mu_{G/\Gamma}(B)=\mu_{G/\Gamma}(X-B)=\infty$. 
\end{lemma}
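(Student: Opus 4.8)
The lemma has two parts: a topological one (that $B = \varphi^{-1}(A)$ has relatively compact symmetric difference with each of its translates), and a measure-theoretic one (that both $B$ and its complement have infinite measure, under the freeness hypothesis). Let me sketch how I'd prove each.

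For the first part: The map $\varphi: G/\Gamma \to X/\Gamma$ sends $g\Gamma$ to $(g.v_0)\Gamma$. Since the action on vertices is transitive with compact stabilizers, and since $X$ is locally finite, I want to understand preimages under $\varphi$. The preimage $\varphi^{-1}(v)$ of a single vertex-orbit-point $v \in V(X)/\Gamma$ should be a union of finitely many... hmm, actually I need to be careful. Let me reconsider.

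Actually the structure is: $\varphi^{-1}(\text{vertex set of } A)$ where $A$ is viewed inside $V(X)/\Gamma$. Each vertex of $X$ (lifting $A$) pulls back under $G \to X$, $g \mapsto g.v_0$, to a coset of the stabilizer $G_{v_0}$, which is compact. So $\phi^{-1}(\tilde{v})$ for a single vertex $\tilde v$ is a single left coset $g G_{v_0}$, which is compact. On the quotient $G/\Gamma$, the preimage $\varphi^{-1}$ of a $\Gamma$-orbit of a vertex is the image of such cosets, still with compact fibers in an appropriate sense.

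The key point for the first part: $A$ has finite edge boundary means $A$ and $V(X)/\Gamma \setminus A$ together touch only finitely many edges. Lifting to $X$ (via a fundamental domain argument) and then to $G/\Gamma$, the "boundary" of $B$ should be related to the preimage of a finite set of edges — hence relatively compact (finitely many compact fibers). Then $B$ has relatively compact topological boundary, and I can invoke Lemma \ref{lem:cmpctBoundaryImpAlmStab}? No wait — that lemma requires $G$ path connected, which is NOT assumed here (we're in the totally-disconnected / graph setting). So I can't use that. Instead I should directly show $B \triangle gB$ is relatively compact, mimicking the argument. Since $X$ is connected and $G$ acts transitively on vertices, $G$ is generated by $G_{v_0}$ together with finitely many elements moving $v_0$ to its neighbors (Cayley–Abels type generation). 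For a generator $s$, $B \triangle sB$ pulls back from $A \triangle sA$ (where $s$ acts on $X/\Gamma$), and $A \triangle sA$ is controlled by the edge boundary of $A$ — so it's finite (finitely many vertex-orbit-points), hence $B\triangle sB$ is relatively compact. Then bootstrap to all $g\in G$ via the cocycle-type identity $B \triangle gh B \subseteq (B\triangle gB) \cup g(B \triangle hB)$ and the compactness of $G_{v_0}$. The main obstacle here is the bookkeeping: relating the edge boundary of $A$ in the quotient $X/\Gamma$ to a genuinely finite/relatively-compact subset of $G/\Gamma$, being careful that "graph edges" vs "group generators" match up, and handling the compact stabilizer correctly in the quotient.

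For the second (measure) part: I want $\mu_{G/\Gamma}(B) = \mu_{G/\Gamma}((G/\Gamma)\setminus B) = \infty$ given $A$ infinite and coinfinite in $V(X)/\Gamma$, with a finite-index $\Gamma' \le \Gamma$ acting freely on $V(X)$. The plan: pass to $\Gamma'$; since $\mu_{G/\Gamma}$ and $\mu_{G/\Gamma'}$ differ by a finite factor (index), it suffices to work with $\Gamma'$. With $\Gamma'$ acting freely on vertices, the fiber $\varphi^{-1}(\text{a vertex in } V(X)/\Gamma')$ is exactly one left coset of $G_{v_0}$ in $G/\Gamma'$, which has a fixed positive finite measure $m := \mu_{G/\Gamma'}(G_{v_0}\Gamma'/\Gamma')$ — here I use unimodularity so that this measure is $G$-invariant, so all these cosets have the same measure $m > 0$ (it equals $\mu_G(G_{v_0})$-ish, using the integration formula quoted in the preliminaries). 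Since $A$ has infinitely many vertex-orbit-points, $B$ contains infinitely many disjoint translates each of measure $m$, so $\mu(B) = \infty$; same for the complement since $A$ is coinfinite. The main obstacle in this part is justifying that the fiber over each vertex has the *same* positive measure — this is exactly where unimodularity and freeness of $\Gamma'$ enter, and I'd want to nail down the normalization using the formula $\mu_G = \int_{G/\Gamma} g.\mu_\Gamma\, d\mu_{G/\Gamma}$ from the Preliminaries, together with the fact that $V(X) \cong G/G_{v_0}$ as $G$-sets.

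I expect the first (topological) part to be the real obstacle, specifically the careful translation between the combinatorial edge-boundary condition on $A \subseteq V(X)/\Gamma$ and a relative-compactness statement in $G/\Gamma$, since we lack path-connectedness and must instead exploit that $G$ is (topologically) generated by the compact stabilizer and finitely many "edge" elements.
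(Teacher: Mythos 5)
Your proposal is essentially correct and, despite the different packaging, follows the same route as the paper. For the first part, the paper does not pass through a Cayley--Abels generating set: for each $g\in G$ it fixes a graph path $P_g$ from $v_0$ to $g.v_0$ and argues that any coset in $B\triangle gB$ gives a projected path that must cross the (finite) edge boundary of $A$, so the offending cosets land in the $\varphi$-preimage of a ball of radius $|P_g|$ around $\partial A$, which is compact by local finiteness and compactness of stabilizers. Your version --- control $B\triangle sB$ for the finitely many generators moving $v_0$ to a neighbor, then bootstrap with $B\triangle ghB\subseteq (B\triangle gB)\cup g(B\triangle hB)$ --- is the same idea with the path replaced by a word, and it works; it is marginally longer since you must first justify the generation statement, while the paper's path argument uses only connectedness directly. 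One genuine imprecision to fix: $G$ does not act on $X/\Gamma$ (the left $G$-action on $X$ does not descend to the quotient by the non-normal subgroup $\Gamma$), so the set ``$A\triangle sA$'' is not defined. The correct statement is the one you give immediately afterwards: $g\Gamma\in B\triangle sB$ forces exactly one of the two adjacent vertices $q(g^{-1}.v_0)$, $q(g^{-1}s.v_0)$ to lie in $A$, so $q(g^{-1}.v_0)$ is an endpoint of one of the finitely many boundary edges, and $B\triangle sB$ sits in the $\varphi$-preimage of a finite vertex set, which is compact. For the measure part, your fiber computation (freeness of $\Gamma'$ makes each vertex fiber an injective copy of $G_{v_0}$ in $G/\Gamma'$, unimodularity makes all these right translates have the same positive measure, and infinitely many disjoint fibers give infinite measure, then descend along the finite cover $G/\Gamma'\to G/\Gamma$) is exactly the content of the paper's explicit fundamental-domain construction with the sets $D_0$, $D_1$, so nothing is missing there.
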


\begin{proof}
    Let $q:X\to X/\Gamma$ and $p:G\to G/\Gamma$ denote the quotient maps. Assume $A\subseteq V(X)/\Gamma$ is a set with finite edge boundary.  Fix a section $r: X/\Gamma\to X$ of the quotient $q$ and choose a distinguished vertex $v_0\in V(X)$. 

    For each $g\in G$, fix a path $P_g$ between $v_0$ and $g. v_0$. 
    Since the action $G\acts X$ is by automorphisms, $h.P_g$ is a path of length $|P_g|$ between $h.v_0$ to $hg.v_0$ for every $h\in G$. It follows that $q(h.P_g)$ is a path between $q(h.v_0)$ and $q(hg.v_0)$ of length at most $|P_g|$. 

    Define $E\coloneqq \{k\in G: \exists \gamma\in \Gamma \text{ such that }\gamma k^{-1}. v_0\in r(A)\}$. Note that $E$ is invariant under the action of $\Gamma$ by right multiplication. The sets thus $p(E)=\varphi^{-1}(A)=B$ coincide.

    We first show that $B\triangle gB$ is relatively compact for every $g\in G$. Since $E$ is $\Gamma$-invariant, $B\triangle gB=p(E\triangle gE)$. 
    The argument resembles the argument we used in the path connected setting. 
    Assume that $h\in E-gE$. 
    So there is a $\gamma\in \Gamma$ such that $\gamma h^{-1}.v_0\in r(A)$ which implies $q(h^{-1}.v_0)=q(\gamma h^{-1}.v_0)\in A$.
    Similarly, for every $\gamma\in \Gamma$ we have $\gamma  h^{-1}g.v_0\not \in r(A)$ and thus $q(h^{-1}g.v_0)\not \in A$. The path $q(h^{-1}.P_{g})$ therefore intersects $\partial A$ and witnesses that the distance between $q(h^{-1}. v_0)$ and $\partial A$ is at most $|P_{g}|$. It follows that there is a $\gamma\in \Gamma$ such that the distance between $\gamma h^{-1}. v_0$ and $r(\partial A)$ is at most $|P_{g}|$.

    The set $r(\partial A)$ is a finite set, and hence by local finiteness and compactness of the stabilizers the set $C_g\coloneqq\{k\in G: k^{-1}. v_0\in B_{|P_{g}|}(r(\partial A))\}$ is compact. By the previous paragraph $E-gE\subseteq C_g\Gamma$ and hence $B\triangle gB=p(E-gE)\subseteq p(C_g)$ lives in a continuous image of a compact set. Repeating mutatis mutandis allows us to conclude that $B\triangle gB$ is relatively compact for every $g\in G$.

    Now assume that there exists finite index subgroup $\Gamma'\leq \Gamma$ such that the restricted action $\Gamma'\acts V(X)$ is free. Assume further that $A$ is both infinite and coinfinite. Observe that $r(A)$ and $r(A^C)$ are infinite disjoint sets and the disjoint union $r(A)\sqcup r(A^C)$ is a fundamental domain for $\Gamma\acts V(X)$ where $A^C=V(X)/\Gamma-A$. 
    
    Fix a set of right coset representatives $S$ for the inclusion $\Gamma'\leq \Gamma$. 
    Then the disjoint union $\bigsqcup_{s\in S}s. r(A)\sqcup \bigsqcup_{s\in S} s .r(A^C)$ is a fundamental domain for the action $\Gamma'\acts X$.
    Define $D_0\coloneqq\{g\in G: g^{-1}. v_0\in \bigsqcup_{s\in S} s .r(A)\}$ and $D_1\coloneqq \{g\in G: g^{-1}. v_0\in \bigsqcup_{s\in S}s. r(A^C)\}$. We proceed by computing the measures of $D_0$ and $D_1$. 

    Note that the measure of the stabilizer $\mu_G(G_{v_0})>0$ is nonzero since $G$ can be written as a countable union of left translates of $G_{v_0}$. Since $r(A)$ is infinite, $D_0$ can be written as a disjoint union of infinitely many left translates of $G_{v_0}$. Since $\mu_G$ is left-invariant, $\mu_G(D_0)=\infty$. Similarly, $\mu_G(D_1)=\infty$. Since $\Gamma'\acts V(X)$ is free, the set $D_0\sqcup D_1$ is a fundamental domain for $\Gamma' \acts G$ by right multiplication. 

    Since the inclusion $\Gamma'\leq \Gamma$ is finite index, there exists $D\subseteq D_0$ with $[\Gamma:\Gamma']\mu_G(D)=\mu_G(D_0)=\infty$ such that $D$ is a fundamental domain for the action $\Gamma\acts D_0\Gamma'$ by right multiplication. Similarly, there is a $\tilde D\subseteq D_1$ with $[\Gamma:\Gamma']\mu_G(\tilde D)=\mu_G(D_1)=\infty$ such that $\tilde D$ is a fundamental domain for $\Gamma\acts D_1\Gamma'$. Observe that $B=p(D)$ and $G/\Gamma-B=p(\tilde D)$ so $\mu_{G/\Gamma}(B)=\mu_{G/\Gamma}(G/\Gamma-B)=\infty$ by the properties of $\mu_{G/\Gamma}$ finishing the proof.
\end{proof}

\begin{thm}\label{thm:graph}
    Let $G$ be an lcsc group and let $\Gamma\leq G$ be a discrete subgroup. Assume $G$ acts transitively on the vertices of a connected locally finite graph $X$ by automorphisms with compact vertex stabilizers.
    
    Let $\kappa$ denote the Koopman representation of $G\acts (G/\Gamma,\mu_{G/\Gamma})$ and assume that the first cohomology group $H^1(G,\kappa)=\{0\}$ vanishes.
    Then the graph $X/\Gamma$ has at most one infinite volume end with respect to $\varphi_*\mu_{G/\Gamma}$.

    Moreover, if $G$ is unimodular and $\Gamma$ admits a finite index subgroup $\Gamma'\leq \Gamma$ such that $\Gamma'\acts X$ is free, then $X/\Gamma$ has at most one end. 
\end{thm}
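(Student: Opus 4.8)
The plan is to mirror exactly the argument used in Theorem \ref{thm:pc}, substituting Lemma \ref{lem:cutImpliesCohGraph} for the combination of Observation \ref{obs:ends} and Lemma \ref{lem:cmpctBoundaryImpAlmStab}, and using the hypothesis $H^1(G,\kappa)=\{0\}$ to derive a contradiction. I would argue by contraposition for both assertions.

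For the first assertion, suppose $X/\Gamma$ has more than one infinite volume end with respect to $\varphi_*\mu_{G/\Gamma}$. By the graph analogue of Observation \ref{obs:ends} (a multi-ended graph admits a set $A\subseteq V(X)/\Gamma$ with finite edge boundary such that both $A$ and its complement push forward to infinite $\varphi_*\mu_{G/\Gamma}$-measure), fix such an $A$. Then $B\coloneqq\varphi^{-1}(A)\subseteq G/\Gamma$ satisfies $\mu_{G/\Gamma}(B)=\mu_{G/\Gamma}(G/\Gamma-B)=\infty$ by definition of the pushforward, and by the first part of Lemma \ref{lem:cutImpliesCohGraph} the set $B$ satisfies $B\triangle gB$ relatively compact, hence $\mu_{G/\Gamma}(B\triangle gB)<\infty$, for every $g\in G$. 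Since $G\acts G/\Gamma$ is transitive (by transitivity on vertices, any $g\Gamma$ is reached, and more to the point the action of $G$ on $G/\Gamma$ is manifestly transitive), Proposition \ref{prop:almStabMapImpNonvanKoopCoh} applies to $B$ and yields $H^1(G,\kappa)\neq\{0\}$, contradicting the hypothesis. Hence $X/\Gamma$ has at most one infinite volume end.

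For the moreover clause, assume $G$ is unimodular and $\Gamma'\leq\Gamma$ is a finite index subgroup acting freely on $X$. Suppose for contradiction that $X/\Gamma$ has more than one end as a graph; since the graph is locally finite, there is a set $A\subseteq V(X)/\Gamma$ with finite edge boundary that is both infinite and coinfinite. Now invoke the second part of Lemma \ref{lem:cutImpliesCohGraph}: its hypotheses (unimodularity, existence of $\Gamma'$ acting freely, $A$ infinite and coinfinite) are exactly what we have, so $B=\varphi^{-1}(A)$ satisfies $\mu_{G/\Gamma}(B)=\mu_{G/\Gamma}(G/\Gamma-B)=\infty$ as well as $B\triangle gB$ relatively compact for all $g$. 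As before, Proposition \ref{prop:almStabMapImpNonvanKoopCoh} produces a nonzero class in $H^1(G,\kappa)$, a contradiction. Therefore $X/\Gamma$ has at most one end.

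The only point that requires genuine care — and I expect it to be the main obstacle — is the graph-theoretic version of Observation \ref{obs:ends}: that a connected locally finite graph with more than one infinite volume end admits a vertex subset with finite edge boundary whose two sides have infinite measure, and in the moreover clause that more than one topological end yields a finite-edge-boundary vertex set that is infinite and coinfinite. The latter is standard (a finite edge cut separating two ends splits the vertex set into two infinite pieces), and the former follows the same way using that the measure $\varphi_*\mu_{G/\Gamma}$ assigns positive mass to each vertex (since $\mu_G(G_{v_0})>0$, as noted in the proof of Lemma \ref{lem:cutImpliesCohGraph}, so that infinitely many vertices in a component forces infinite measure). One should also confirm that removing a compact subset of the geometric realization of $X/\Gamma$ can be reduced to removing a finite vertex set together with a finite set of edges, which is immediate by local finiteness. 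Everything else is a direct transcription of the path-connected proof.
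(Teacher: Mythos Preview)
Your proposal is correct and follows exactly the paper's approach: the paper's proof is the single sentence ``The theorem then follows from Observation~\ref{obs:ends}, and Proposition~\ref{prop:almStabMapImpNonvanKoopCoh}, and Lemma~\ref{lem:cutImpliesCohGraph},'' and you have spelled out precisely how these three ingredients combine by contraposition in each of the two assertions. Your final paragraph correctly isolates the one implicit step the paper glosses over, namely converting the topological Observation~\ref{obs:ends} (compact boundary in the geometric realization) into a vertex subset with finite edge boundary so that Lemma~\ref{lem:cutImpliesCohGraph} applies; this reduction is routine for locally finite graphs, and since the pushforward $\varphi_*\mu_{G/\Gamma}$ is supported on $V(X)/\Gamma$ the measure conditions transfer directly.
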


\begin{proof}
    The theorem then follows from Observation \ref{obs:ends}, and Proposition \ref{prop:almStabMapImpNonvanKoopCoh}, and Lemma \ref{lem:cutImpliesCohGraph}. 
\end{proof}

Theorem \ref{thm:1VolumetricEndGraph} now follows directly from Proposition \ref{prop:vanishingCoh} and Theorem \ref{thm:graph}.

\bibliographystyle{alpha}
\bibliography{myReference}

\begin{thebibliography}{BdlHV08}

\bibitem[BdlHV08]{KazhdanBook}
Bachir Bekka, Pierre de~la Harpe, and Alain Valette.
\newblock {\em Kazhdan's property ({T})}, volume~11 of {\em New Mathematical Monographs}.
\newblock Cambridge University Press, Cambridge, 2008.

\bibitem[BL01]{BassLubotzky2001}
Hyman Bass and Alexander Lubotzky.
\newblock {\em Tree lattices}, volume 176 of {\em Progress in Mathematics}.
\newblock Birkh\"auser Boston, Inc., Boston, MA, 2001.

\bibitem[Cor90]{Corlette1990}
Kevin Corlette.
\newblock Hausdorff dimensions of limit sets. {I}.
\newblock {\em Invent. Math.}, 102(3):521--541, 1990.

\bibitem[Del77]{Delorme1977}
Patrick Delorme.
\newblock {$1$}-cohomologie des repr\'esentations unitaires des groupes de {L}ie semi-simples et r\'esolubles. {P}roduits tensoriels continus de repr\'esentations.
\newblock {\em Bull. Soc. Math. France}, 105(3):281--336, 1977.

\bibitem[Eym72]{Eymard1972}
Pierre Eymard.
\newblock {\em Moyennes invariantes et repr\'esentations unitaires}, volume Vol. 300 of {\em Lecture Notes in Mathematics}.
\newblock Springer-Verlag, Berlin-New York, 1972.

\bibitem[Fre31]{Freudenthal1931}
Hans Freudenthal.
\newblock \"uber die {E}nden topologischer {R}\"aume und {G}ruppen.
\newblock {\em Math. Z.}, 33(1):692--713, 1931.

\bibitem[Fre51]{Freudenthal:2Ends}
Hans Freudenthal.
\newblock La structure des groupes \`a{} deux bouts et des groupes triplement transitifs.
\newblock {\em Indag. Math.}, 13:288--294, 1951.
\newblock Nederl. Akad. Wetensch. Proc. Ser. A {\bf 54}.

\bibitem[Gre69]{Greenleaf1969}
F.~P. Greenleaf.
\newblock Amenable actions of locally compact groups.
\newblock {\em J. Functional Analysis}, 4:295--315, 1969.

\bibitem[Gui77]{Guichardet}
A.~Guichardet.
\newblock \'etude de la {$l$}-cohomologie et de la topologie du dual pour les groupes de {L}ie \`a{} radical ab\'elien.
\newblock {\em Math. Ann.}, 228(3):215--232, 1977.

\bibitem[Hel78]{HelgasonBook}
Sigurdur Helgason.
\newblock {\em Differential geometry, {L}ie groups, and symmetric spaces}, volume~80 of {\em Pure and Applied Mathematics}.
\newblock Academic Press, Inc. [Harcourt Brace Jovanovich, Publishers], New York-London, 1978.

\bibitem[Hop44]{Hopf1944}
Heinz Hopf.
\newblock Enden offener {R}\"aume und unendliche diskontinuierliche {G}ruppen.
\newblock {\em Comment. Math. Helv.}, 16:81--100, 1944.

\bibitem[Hou74]{Houghton}
C.~H. Houghton.
\newblock Ends of locally compact groups and their coset spaces.
\newblock {\em J. Austral. Math. Soc.}, 17:274--284, 1974.

\bibitem[HP06]{HaagPrzy2006}
Uffe Haagerup and Agata Przybyszewska.
\newblock Proper metrics on locally compact groups, and proper affine isometric actions on {B}anach spaces.
\newblock 2006.

\bibitem[Iwa51]{Iwasawa:2Ends}
Kenkichi Iwasawa.
\newblock Topological groups with invariant compact neighborhoods of the identity.
\newblock {\em Ann. of Math. (2)}, 54:345--348, 1951.

\bibitem[KM08]{KronMoller2007}
Bernhard Kr\"on and R\"ognvaldur~G. M\"oller.
\newblock Analogues of {C}ayley graphs for topological groups.
\newblock {\em Math. Z.}, 258(3):637--675, 2008.

\bibitem[NR98]{NibloRoller}
Graham~A. Niblo and Martin~A. Roller.
\newblock Groups acting on cubes and {K}azhdan's property ({T}).
\newblock {\em Proc. Amer. Math. Soc.}, 126(3):693--699, 1998.

\bibitem[Pet09]{PetersonL2Rigid}
Jesse Peterson.
\newblock {$L^2$}-rigidity in von {N}eumann algebras.
\newblock {\em Invent. Math.}, 175(2):417--433, 2009.

\bibitem[Sag95]{Sageev:EndsOfPairs}
Michah Sageev.
\newblock Ends of group pairs and non-positively curved cube complexes.
\newblock {\em Proc. London Math. Soc. (3)}, 71(3):585--617, 1995.

\bibitem[Sco78]{Scott:Ends}
Peter Scott.
\newblock Ends of pairs of groups.
\newblock {\em J. Pure Appl. Algebra}, 11(1-3):179--198, 1977/78.

\bibitem[Sha00]{Shalom2000}
Yehuda Shalom.
\newblock Rigidity of commensurators and irreducible lattices.
\newblock {\em Invent. Math.}, 141(1):1--54, 2000.

\bibitem[Sta68]{Stallings68}
John~R. Stallings.
\newblock On torsion-free groups with infinitely many ends.
\newblock {\em Ann. of Math. (2)}, 88:312--334, 1968.

\bibitem[Sta71]{Stallings71}
John~R. Stallings.
\newblock {\em Group theory and three-dimensional manifolds}, volume~4 of {\em Yale Mathematical Monographs}.
\newblock Yale University Press, New Haven, Conn.-London, 1971.
\newblock A James K. Whittemore Lecture in Mathematics given at Yale University, 1969.

\bibitem[Yam53]{Yamabe1953}
Hidehiko Yamabe.
\newblock A generalization of a theorem of {G}leason.
\newblock {\em Ann. of Math. (2)}, 58:351--365, 1953.

\bibitem[Zip50]{Zippin:2Ends}
Leo Zippin.
\newblock Two-ended topological groups.
\newblock {\em Proc. Amer. Math. Soc.}, 1:309--315, 1950.

\end{thebibliography}

\end{document}